\title{$I$-functions of Calabi--Yau 3-folds in Grassmannians}
\date{}
\author[D.~Inoue]{Daisuke Inoue}
\address{
Graduate School of Mathematical Sciences,
The University of Tokyo,
3-8-1 Komaba,
Meguro-ku,
Tokyo,
153-8914,
Japan.}
\email{ino@ms.u-tokyo.ac.jp}
\author[A.~Ito]{Atsushi Ito}
\address{
Department of Mathematics,
Graduate School of Science,
Kyoto University,
Kyoto 606-8502,
Japan.
}
\email{aito@math.kyoto-u.ac.jp}
\author[M.~Miura]{Makoto Miura}
\address{
Korea Institute for Advanced Study,
85 Hoegiro,
Dongdaemun-gu,
Seoul,
130-722,
Republic of Korea.
}
\email{miura@kias.re.kr}
\def\linseq{16}
\def\dnc{18}
\def\sqmix{25}
\newtheorem{Theorem}{Theorem}[section]
\newtheorem{Proposition}[Theorem]{Proposition}
\newtheorem{Lemma}[Theorem]{Lemma}
\newtheorem{Fact}[Theorem]{Fact}
\newtheorem*{Main results}{Main results}
\theoremstyle{definition}
\newtheorem{Assumption}[Theorem]{Assumption}
\newtheorem{Definition}[Theorem]{Definition}
\newtheorem{Example}[Theorem]{Example}
\theoremstyle{remark}
\newtheorem{Remark}[Theorem]{Remark}
\DeclareMathOperator{\Pic}{Pic}
\DeclareMathOperator{\Sym}{Sym}
\DeclareMathOperator{\NE}{NE}
\DeclareMathOperator{\Hom}{Hom}
\DeclareMathOperator{\ev}{ev}
\DeclareMathOperator{\ann}{ann}
\DeclareMathOperator{\Mat}{Mat}
\theoremstyle{definition}
\newtheorem*{acknowledgements}{Acknowledgements}
\begin{document}
\begin{abstract}
We study $I$-functions of Calabi--Yau 3-folds with Picard number one which are zero loci of general sections of direct sums of 
globally generated irreducible homogeneous vector bundles on Grassmannians. 
\end{abstract}

\maketitle

\section{Introduction}
A smooth projective variety $Y$ with trivial canonical bundle and $H^i(Y, \mathcal{O}_{Y}) = 0$ $(1 \leq i \leq \dim Y - 1)$ is called a Calabi--Yau manifold.
Constructing Calabi--Yau manifolds is of considerable interest to both algebraic geometers and theoretical physicists, since many different topology types are possible in higher dimensions ($\dim Y \geq 3$) and also we observe a mysterious symmetry (mirror symmetry) among them.
Recently the present authors have obtained a complete list of Calabi--Yau 3-folds which are given by the zero loci of general sections of
direct sums of globally generated irreducible homogeneous vector bundles over Grassmannians \cite{IIM}.
In this paper,
we focus on such Calabi--Yau 3-folds of Picard number one,
which are listed in Table \ref{cylist},
and determine differential operators which characterize the $I$-functions of the Calabi--Yau 3-folds.

In \cite{Kuc}, K\"{u}chle obtained a complete list of Fano 4-folds with Fano index one which are given by the zeros of general sections of direct sums of globally generated irreducible homogeneous vector bundles over Grassmannians, which we call K\"{u}chle's Fano 4-folds. Many of our Calabi--Yau 3-folds in Table \ref{cylist} are naturally given by the anticanonical section of K\"{u}chle's Fano 4-folds. It should be noted, however, that some of our Calabi--Yau 3-folds do not factor through K\"{u}chle's Fano 4-folds.

\vspace{2mm}
In \cite{CCGK}, they determine the quantum periods of all three-dimensional Fano manifolds. 
In this paper, we follow their method to calculate $I$-functions of Calabi--Yau $3$-folds.
An $I$-function is a certain function which determines the $J$-function of quantum cohomology of a Calabi--Yau 3-fold.
When we restrict our attention to Calabi--Yau 3-folds of Picard number one, the $I$-functions satisfy certain differential equations of fourth order (Picard--Fuchs equations) and play a central role in the proof of the mirror theorem \cite{Giv1}, \cite{LLY}.
We can use this relation to determine the $I$-functions.
For the other Calabi--Yau 3-folds in Table \ref{cylist}, not factoring through K\"{u}chle's Fano 4-folds, we use the so-called abelian/nonabelian correspondence in \cite{CFKS} which is applicable to the (twisted) $I$-functions of homogeneous vector bundles on Grassmannians.
For most of them, straightforward applications of abelian/nonabelian correspondence are possible to determine the $I$-functions.

For all Calabi--Yau 3-folds in Table \ref{cylist}, except for No.\ \dnc, either of the two methods above can be used to determine the $I$-functions. 
To handle the remaining case No.\ \dnc,
we use the fact that Calabi--Yau 3-folds of type No.\ \dnc \ are isomorphic to zero loci of sections of suitable vector bundles on a geometric invariant theory quotient,
which is known as the determinantal nets of conics \cite{EPS}.
Although it is not known whether the abelian/nonabelian correspondence holds for the determinantal nets of conics or not,
we compute a conjectural $I$-function for No.\ \dnc \ under the assumption of the validity of the abelian/nonabelian correspondence for determinantal nets of conics.

\begin{table}[]\label{cylist}
\begin{tabular}[c]{|l|l|l|r|r|r|l|l|}
\hline
No. & Grassmann & vector bundle & $H^3$ & $c_2.H$ & $c_3$ & K\"uchle &  
Database \\ \hline \hline
1 & $G(2,4)$ & $\mathcal{O}(4)$ & $8$ & $56$ & $-176$ & & $6$ \\
2 & $G(2,5)$ & $\mathcal{O}(1) \oplus \mathcal{O}(2)^{\oplus 2}$ & $20$ & $68$ & $-120$ & (b2) & $25$ \\
3 & $$ & $\mathcal{O}(1)^{\oplus 2} \oplus \mathcal{O}(3)$ & $15$ & $66$ & $-150$ & (b1) & $24$ \\
4 & $$ & $\mathcal{S}^{\ast}(1) \oplus \mathcal{O}(2)$ & $24$ & $72$ & $-116$ & & $29$ \\
5 & $$ & $\Lambda^2 \mathcal{Q}(1)$ & $25$ & $70$ & $-100$ &  & $101$ \\
6 & $G(2,6)$ & $\mathcal{O}(1)^{\oplus 4} \oplus \mathcal{O}(2)$ & $28$ & $76$ & $-116$ & (b6) & $26$ \\
7 & $$ & $\mathcal{S}^{\ast}(1) \oplus \mathcal{O}(1)^{\oplus 3}$ & $33$ & $78$ & $-102$ & (b5), I & $198$ \\
10 & $$ & $\mathcal{Q}(1) \oplus \mathcal{O}(1)$ & $42$ & $84$ & $-98$ & (b3), II & $27$ \\
12 & $G(2,7)$ & $\mathcal{O}(1)^{\oplus 7}$ & $42$ & $84$ & $-98$ & (b7), III & $27$ \\
13 & $$ & $\Sym^2 \mathcal{S}^{\ast} \oplus \mathcal{O}(1)^{\oplus 4}$ & $56$ & $92$ & $-92$ & (b8), V & $212$ \\
15 & $$ & $\Lambda^4 \mathcal{Q} \oplus \mathcal{O}(1) \oplus \mathcal{O}(2)$ & $36$ & $84$ & $-120$ & (b10) & $185$ \\
16 & $$ & $\mathcal{S}^{\ast}(1) \oplus \Lambda^4 \mathcal{Q}$ & $42$ & $84$ & $-98$ & & $27$ \\
17 & $G(2,8)$ & $\Lambda^5 \mathcal{Q} \oplus \mathcal{O}(1)^{\oplus 3}$ & $57$ & $90$ & $-84$ & (b11), VI & $186$ \\
18 \label{sec5} & $$ & $\Sym^2 \mathcal{S}^{\ast} \oplus \Lambda^5 \mathcal{Q}$ & $72$ & $96$ & $-72$ & & $\text{unknown}$ \\
19 & $G(3,6)$ & $\mathcal{O}(1)^{\oplus 6}$ & $42$ & $84$ & $-96$ & (c1), IV & $28$ \\
20 & $$ & $\Lambda^2 \mathcal{S}^{\ast} \oplus \mathcal{O}(1)^{\oplus 2} \oplus \mathcal{O}(2)$ & $32$ & $80$ & $-116$ & (c2) & $42$ \\
21 & $$ & $\mathcal{S}^{\ast}(1) \oplus \Lambda^2 \mathcal{S}^{\ast}$ & $42$ & $84$ & $-96$ & & $28$ \\
22 & $G(3,7)$ & $\Sym^2 \mathcal{S}^{\ast} \oplus \mathcal{O}(1)^{\oplus 3}$ & $128$ & $128$ & $-128$ & (c4) & $3$ \\
23 & $$ & $(\Lambda^2 \mathcal{S}^{\ast})^{\oplus 2} \oplus \mathcal{O}(1)^{\oplus 3}$ & $61$ & $94$ & $-86$ & (c6), VII & $124$ \\
24 & $$ & $(\Lambda^3 \mathcal{Q})^{\oplus 2} \oplus \mathcal{O}(1)$ & $72$ & $96$ & $-74$ & (c3), IX & $\text{unknown}$ \\
25 & $$ & $\Lambda^2 \mathcal{S}^{\ast} \oplus \Lambda^3 \mathcal{Q} \oplus \mathcal{O}(1)^{\oplus 2}$ & $66$ & $96$ & $-84$ & (c5), VIII & $\text{unknown}$ \\
28 & $G(3,8)$ & $(\Lambda^2 \mathcal{S}^{\ast})^{\oplus 4}$ & $92$ & $104$ & $-64$ & & $\text{unknown}$ \\
 \hline
\end{tabular}
\vspace{3mm}
 \caption{The list of 
 direct sums of globally generated irreducible homogeneous vector bundles
 whose zero loci give rise to Calabi-Yau $3$-folds with Picard number one from \cite[Table 1]{IIM}. 
 The numbering is the same as in \cite{IIM}.
 The last column indicates the corresponding numbers of Picard--Fuchs operators in the electronic database \cite{vEvS2}. }
\end{table}

\begin{Main results}
  We determine $I$-functions of Calabi--Yau $3$-folds in Table $\ref{cylist}$ except for No.\ $\dnc$.
  For a Calabi--Yau $3$-fold of No.\ $\dnc$, we give a conjectural $I$-function which is correct if the abelian/nonabelian correspondence holds for determinantal nets of conics. 
  We also determine Picard--Fuchs operators which annihilate these $I$-functions. 
\end{Main results}

Except for No.\ \dnc, we can obtain $J$-functions of Calabi--Yau $3$-folds in Table $\ref{cylist}$ from these $I$-functions. 
If the abelian/nonabelian correspondence holds for determinantal nets of conics, we can also obtain the $J$-function of a Calabi--Yau 3-fold of No.\ \dnc.

\vspace{2mm}
In Appendix \ref{pflist}, we list the resulting differential operators for the $I$-functions of our Calabi--Yau 3-folds with Picard number one. In the references \cite{AvEvSZ}, \cite{vEvS1}, 
hundreds of fourth order differential operators which satisfy the same properties as those for the $I$-functions (called Calabi--Yau operators) are generated. 
It is worth while remarking that some of the differential operators (for No.\ \dnc, 24, 25, 28) are not in their list.
    It is also interesting to observe that some of the obtained Picard--Fuchs operators have more than one maximally unipotent monodromy points, which often indicate non-trivial Fourier--Mukai partners \cite{Rod}, \cite{HT}, \cite{Miu}.

\vspace{2mm}
The organization of this paper is as follows. 
  In Section \ref{prel}, we recall the definitions of $J$-functions and $\mathcal{E}$-twisted $J$-functions. We also introduce $I$-functions for Calabi--Yau $3$-folds. 
In Section \ref{abnonab}, we briefly review the abelian/nonabelian correspondence and apply it to the Calabi--Yau 3-folds in Table \ref{cylist}.
For most of Calabi--Yau 3-folds in Table \ref{cylist}, the $\mathcal{E}$-twisted $I$-functions may be derived readily by applying the abelian/nonabelian correspondence. 
In Section \ref{local}, we deal with vector bundles which contain line bundles as irreducible summands. We determine the quantum differential equations calculating the twisted Gromov--Witten invariants by localization, and applying the quantum Lefschetz theorem. 
In Section \ref{dnc}, we do our calculation for No.\ \dnc \ under the assumption that the abelian/nonabelian correspondence holds for determinantal nets of conics.

\begin{acknowledgements}
We thank Professor Shinobu Hosono and Doctor Fumihiko Sanda 
for valuable discussions 
at weekly seminars and for various useful comments.
A.~I.~was supported by the Grant-in-Aid for JSPS fellows, No.\ 26--1881.
A part of this work was done when M.~M.~was supported by Frontiers of
Mathematical Sciences and Physics at University of Tokyo. 
M.~M.~was also supported by Korea Institute for Advanced Study.
\end{acknowledgements}

\section{Preliminaries}\label{prel}

\subsection{$J$-functions and $I$-functions}
In this subsection, we introduce the $J$-function of a smooth projective variety $X$ over $\mathbb{C}$.
For the sake of simplicity, we shall assume that the cohomology group of $X$ consists of only even degree except when we consider a Calabi--Yau $3$-fold. 
Let $X_{0,n,d}$ be the moduli space of  genus zero, $n$-pointed stable maps to $X$ representing a homology class $d \in H_2(X, \mathbb{Z})$.
Let $\ev_i: X_{0,n,d} \rightarrow X$ be the evaluation map at the $i$-th marked point.
Let $\NE(X) \subset N_1(X)$ be the semigroup generated by the classes of effective curves in $N_1(X)$. 
The Novikov ring of $X$, which is denoted by $\Lambda_{X}$, is defined to be the completion of the semigroup ring $ \mathbb{C}[\NE(X)] $.
We denote by $Q^d \in \Lambda_{X}$ the Novikov variable of an effective curve class $d \in \NE(X)$.
Let $T_0=1, T_1, \dots, T_l, T_{l+1}, \dots, T_m=[\mathrm{pt}]$ be a homogeneous basis of $H^{\ast}(X, \mathbb{C})$ 
so that $T_1, \dots, T_l$ generate $H^2(X, \mathbb{C})$, which determine coordinates $\bm{\tau} = \sum_{i=0}^m \tau_i T_i$ on $H^{\ast}(X, \mathbb{C})$.
Let $T^0, \dots, T^m$ be the dual basis with respect to the Poincar\'e pairing. 
\begin{Definition}
  The $J$-function of $X$, which is a formal function of $\bm{\tau}$ and $1/z$ taking values in $H^{\ast}(X, \mathbb{C}) \otimes \Lambda_X$, is defined as follows: 
  \begin{equation}\label{Jfct}
J_{X}(\bm{\tau}, z) := z + \bm{\tau} + \sum_{d \in H_2(X, \mathbb{Z})} \sum_{a=0}^{m} \sum_{n=0}^{\infty} T^a \frac{Q^d}{n!} \left\langle \bm{\tau}, \dots, \bm{\tau}, \frac{T_a}{z - \psi_{n+1}} \right\rangle_{0,n+1,d},
  \end{equation}
  where
  \begin{equation*}
    \left\langle \bm{\tau}, \dots, \bm{\tau}, \frac{T_a}{z - \psi_{n+1}} \right\rangle_{0,n+1,d} = \int_{[X_{0,n+1,d}]^{\text{vir}}} \ev_1^{\ast} \bm{\tau} \wedge \cdots \wedge \ev_n^{\ast} \bm{\tau} \wedge \frac{\ev_{n+1}^{\ast}T_a}{z - \psi_{n+1}}
  \end{equation*}
  is the genus zero descendant Gromov--Witten invariant of $X$. 
\end{Definition}
Since the genus zero Gromov--Witten theory of $X$ is recovered from the $J$-function of $X$ (see \cite{CG}), the $J$-function is an important object.

We define the small $J$-function of $X$ as the restriction of the $J$-function (\ref{Jfct}) to the small parameter space, i.e.\ $\tau_0 = \tau_{l+1} = \cdots = \tau_{m} = 0$.
We denote $\tau = \sum_{i=1}^{l} \tau_i T_i \in H^2(X, \mathbb{C})$. 

\begin{Example}
  Let $Y$ be a Calabi--Yau $3$-fold of Picard number one.
  Let $H$ be the ample generator of the Picard group of $Y$, which determines the coordinates $\tau H$ on $H^2(Y, \mathbb{C})$.
  Let $\deg Y = H^3$ be the degree of $Y$ with respect to $H$.
  The small $J$-function of $Y$ can be written as follows: 
  \begin{equation*}
    J_{Y}(\tau, z) = ze^{\tau H /z}\left( 1 + \frac{H^2}{\deg (Y)} \sum_{d=1}^{\infty} n_d d^3 \sum_{k=1}^{\infty} \frac{Q^{k d} e^{kd \tau}}{(H + k d z)^2}\right), 
  \end{equation*}
  where $n_d$ is the genus zero Gopakumar--Vafa invariant of $Y$,
  which is related to the genus zero Gromov--Witten invariants of $Y$ by the multiple cover formula \cite{AM}, \cite{GV}.
\end{Example}
\vspace{1mm}

It is difficult to determine the $J$-function from the definition.
To determine the small $J$-functions of nef complete intersection varieties in toric Fano manifolds, Givental introduced small $I$-functions in his proof of mirror theorem.
Recently, $I$-functions are found for broader class of varieties \cite{Iri}, \cite{Bro}, \cite{CFKS}, \cite{CFK}. 
In general, $I$-functions are related to $J$-functions via Birkhoff factorization procedure and change of coordinates (so-called the mirror map).

\vspace{2mm}
In this paper,
we use a terminology,
\textit{small $I$-functions of Calabi--Yau $3$-folds of Picard number one}.
Instead of defining them, we refer to some required properties here:
Let $H$ be the ample generator of the Picard group of a Calabi--Yau 3-fold $Y$.
A small $I$-function of $Y$ is a formal function of $tH \in H^2(Y, \mathbb{C})$ and $1/z$ to $H^{\text{even}}(Y, \mathbb{C})$, and it has the form
\begin{equation*}
I_{Y}(t, z) = I_0(t)z + I_1(t) H + I_2(t) H^2 z^{-1} + I_3(t) H^3 z^{-2}. 
\end{equation*}
This $I$-function is related to the small $J$-function of $Y$ as
\begin{equation}\label{mirrthm}
  J_{Y}(\tau, z) = \frac{I_{Y}(t, z)}{I_0(t)} \quad \left(\tau = \frac{I_1(t)}{I_0(t)}\right).
\end{equation}
The relation is usually called the mirror theorem.

We can give a case-by-case definition of a small $I$-function 
$I_{Y}(t, z)$
of each Calabi--Yau 3-fold $Y$
in Table \ref{cylist} 
by using a twisted $I$-function of an ambient variety,
either Definition \ref{Ifct} or (\ref{qlt}),
together with an appropriate projection and 
identification of
cohomology groups.

\subsection{$\mathcal{E}$-twisted settings}
Let $\mathcal{E}$ be a globally generated vector bundle on $X$ and $Y$ be a zero locus of a general section of $\mathcal{E}$. 
We introduce the $\mathcal{E}$-twisted $J$-function, which is closely related to the $J$-function of $Y$. 
We fix a homogeneous basis $T_i$ $(0 \leq i \leq m)$ of $H^{\ast}(X, \mathbb{C})$ as in the previous subsection.
\begin{Definition}
  Let $\mathcal{E}$ be a globally generated vector bundle on $X$. 
  The $\mathcal{E}$-twisted $J$-function of $X$, which is a function of $\bm{\tau} \in H^{\ast}(X, \mathbb{C})$ and $1/z$ taking values in $H^{\ast}(X, \mathbb{C}) \otimes \Lambda_X$, is defined by
  \begin{equation*}
    J^{\mathcal{E}}_{X}(\bm{\tau}, z) := z + \bm{\tau} + \sum_{d \in H_2(X, \mathbb{Z})} \sum_{a=0}^{m} \sum_{n=0}^{\infty} T^a \frac{Q^d}{n!} \int_{[X_{0,n+1,d}]^{\text{vir}}} e(\mathcal{E}_{0,n,d}') \wedge \ev_1^{\ast} \bm{\tau} \wedge \cdots \wedge \ev_n^{\ast} \bm{\tau} \wedge \frac{\ev_{n+1}^{\ast}T_a}{z - \psi_{n+1}}
  \end{equation*}
  where $\mathcal{E}_{0,n,d}'$ is a suitable vector bundle on $X_{0,n,d}$.
  The precise definition of $\mathcal{E}_{0,n,d}'$ is found in \cite[Section 9]{CG}.
  We define the $\mathcal{E}$-twisted small $J$-function by restricting parameters
  as $\tau_0 = \tau_{l+1} = \cdots = \tau_m = 0$.
\end{Definition}

Let $Y$ be a zero locus of general sections of $\mathcal{E}$ on $X$.
Let $i: Y \hookrightarrow X$ be the natural inclusion.
This inclusion induces a homomorphism of Novikov rings $\Lambda_{Y} \rightarrow \Lambda_{X}$ by $Q^d \mapsto Q^{i_{\ast}d}$ for $d \in H_2(Y, \mathbb{Z})$.
The following fact gives a relation of $J_Y$ and $J_X^{\mathcal{E}}$:
\begin{Fact}[\cite{KKP}]
  Under the above settings, we have
  \begin{equation}\label{CKconj}
    i_{\ast}J_Y(i^{\ast} \bm{\tau}, z) = J_X^{\mathcal{E}}(\bm{\tau}, z) \cup e(\mathcal{E})
  \end{equation}
  where $\bm{\tau} \in H^{\ast}(X, \mathbb{C})$ and the equality holds after applying the above homomorphism $\Lambda_Y \rightarrow \Lambda_X$ to the left hand side. 
\end{Fact}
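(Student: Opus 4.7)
The plan is to compare both sides term by term in the Novikov expansion. For the ``unstable'' terms (the constant and linear parts in $\bm{\tau}$) I would use the fact that $Y$ is cut out by a regular section of $\mathcal{E}$, so $i_*1_Y = e(\mathcal{E})$ in $H^*(X)$. Combined with the projection formula this yields $i_*z = z \cup e(\mathcal{E})$ and $i_*(i^*\bm{\tau}) = \bm{\tau} \cup e(\mathcal{E})$, matching the constant and linear terms of $J_X^{\mathcal{E}}(\bm{\tau},z)\cup e(\mathcal{E})$.

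For the descendant terms, for each $d'\in\NE(Y)$ with $i_*d' = d$ the inclusion $i$ induces a natural morphism $\iota_{n+1,d'}\colon Y_{0,n+1,d'}\to X_{0,n+1,d}$ that is compatible with evaluation maps, $\ev_j^X\circ\iota_{n+1,d'} = i\circ\ev_j^Y$, and with $\psi$-classes, $\iota_{n+1,d'}^*\psi_{n+1}^X = \psi_{n+1}^Y$. The central ingredient I would invoke is the virtual class identity
\begin{equation*}
\sum_{d'\,:\, i_*d' = d} (\iota_{n+1,d'})_* [Y_{0,n+1,d'}]^{\text{vir}} \;=\; e(\mathcal{E}_{0,n+1,d}) \cap [X_{0,n+1,d}]^{\text{vir}},
\end{equation*}
where $\mathcal{E}_{0,n+1,d} = \mathcal{E}'_{0,n,d}\oplus \ev_{n+1}^*\mathcal{E}$ is the ``full'' pushforward bundle. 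This is the heart of Kim--Kresch--Pantev: it reflects that the defining section of $\mathcal{E}$ on $X$ induces a section of $\mathcal{E}_{0,n+1,d}$ on $X_{0,n+1,d}$ whose zero scheme is exactly the union of the $Y_{0,n+1,d'}$.

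With this identity granted, the rest is bookkeeping. The sum over the dual basis $T_Y^{a_Y}$ of $H^*(Y)$, using $\sum_{a_Y}T_Y^{a_Y}\langle -,T_{a_Y,Y}\rangle_Y = \mathrm{id}$, collapses the descendant sum to a pushforward along $\ev_{n+1}^Y$. Applying $i_*$ and the compatibility $i\circ\ev_{n+1}^Y = \ev_{n+1}^X\circ \iota_{n+1,d'}$ converts this to a pushforward along $\ev_{n+1}^X$ of a class on $X_{0,n+1,d}$; the projection formula for $\iota_{n+1,d'}$ moves the $\prod\ev_j^{Y*}i^*\bm{\tau}$ and $(z-\psi_{n+1}^Y)^{-1}$ to their $X$-analogues. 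Summing over $d'$ with $i_*d' = d$ and applying the virtual class identity produces $e(\mathcal{E}_{0,n+1,d})\cap[X_{0,n+1,d}]^{\text{vir}}$ inside the integrand. Finally, the factor $\ev_{n+1}^* e(\mathcal{E})$ splits off under $(\ev_{n+1}^X)_*$ as $e(\mathcal{E})\cup$ by the projection formula, which is precisely the extra $\cup\,e(\mathcal{E})$ on the right; the residual $e(\mathcal{E}'_{0,n,d})$ is exactly the twisting in the definition of $J_X^{\mathcal{E}}$.

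The main obstacle is the virtual class identity itself. Its proof requires a careful comparison of perfect obstruction theories on $X_{0,n+1,d}$ and on the components of the moduli of stable maps to $Y$, together with the virtual pullback / excess intersection formalism that handles the possible decomposition of $Y_{0,n+1,d}$ according to preimages $d'$ of $d$ under $i_*$. I would not reprove this but appeal to \cite{KKP}. In the Picard-number-one setting that concerns the present paper, this decomposition is essentially trivial: $\NE(Y)\hookrightarrow\NE(X)$ is injective on effective classes, so the sum over $d'$ mapping to a fixed $d$ has at most one term.
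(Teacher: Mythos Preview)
The paper does not prove this statement at all: it is stated as a \emph{Fact} with a citation to \cite{KKP} and no further argument. Your proposal therefore goes beyond what the paper does, by sketching the mechanism behind the cited result. Your outline is accurate: the matching of unstable terms via $i_*1_Y=e(\mathcal{E})$ and the projection formula is correct, and the virtual class identity you state is precisely the content of Kim--Kresch--Pantev that drives the descendant comparison. Since you ultimately appeal to \cite{KKP} for that identity, your proposal is consistent with the paper's treatment, just more explicit.
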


Since the left hand side of (\ref{CKconj}) involves the push-forward by $i$, we cannot recover full $J$-function of $Y$ in general.
In this paper, we only consider small $J$-functions of Calabi--Yau $3$-folds of Picard number one which are zero loci of homogeneous vector bundles on Grassmannians.
These conditions enable us to determine the small $J$-function $J_Y$ by calculating the $\mathcal{E}$-twisted small $J$-function $J_X^{\mathcal{E}}$.

To apply the equation (\ref{CKconj}), it is convenient to consider a composition of $J_X^{\mathcal{E}}$ with a projection $H^{\ast}(X, \mathbb{C}) \rightarrow H^{\ast}(X, \mathbb{C}) / \ann(e(\mathcal{E}))$, where $\ann(e(\mathcal{E})) = \{x \in H^{\ast}(X, \mathbb{C}) \mid x \cup e(\mathcal{E}) = 0 \}$.
We denote the composite function by $\tilde{J}^{\mathcal{E}}_{X}$.
It is known that $\tilde{J}^{\mathcal{E}}_{X}$ is a function on $H^{\ast}(X, \mathbb{C})/\ann(e(\mathcal{E}))$.

\vspace{2mm}
Similar to $I$-functions, there exist $\mathcal{E}$-twisted $I$-functions for large class of $X$ and $\mathcal{E}$ (cf.\ \cite{CG}, \cite{CFKS}, \cite{CFK}).
In this paper, we mainly use the quantum Lefschetz theorem (see Theorem \ref{qL}) and the abelian/nonabelian correspondence (see Theorem \ref{ItoJ}) to calculate $\mathcal{E}$-twisted $I$-functions. 
As a result, we can calculate $\mathcal{E}$-twisted small $J$-functions from $\mathcal{E}$-twisted small $I$-functions by the same formula as (\ref{mirrthm}).

\section{Quantum differential equations via abelian/nonabelian correspondence}\label{abnonab}

\subsection{Short review of abelian/nonabelian correspondence}
Let
$G$ be a reductive algebraic group. Let $V$ be an affine space with a 
linear action of $G$. By the geometric invariant theory, we define a quasi-projective variety $V/\!\!/G$
from the triple $(V,G,\chi)$, where $\chi$ is a character of $G$. Let $T$ be a maximal torus of $G$ and 
consider the induced action of $T$ on $V$. Associated to $T$, we obtain another variety $V/\!\!/T$ from $(V,T,\chi|_T)$. In this paper, we assume the following properties:
\begin{Assumption}\label{assabnonab}
  Let $V^{\text{s}}(G)$ (resp.\ $V^{\text{ss}}(G)$) be the stable locus (resp.\ semistable locus) of the action of $G$ on $V$.
  We use the same notation for the action of $T$ on $V$. We assume 
  \begin{equation*}
    V^{\text{s}}(G) = V^{\text{ss}}(G), \quad V^{\text{s}}(T) = V^{\text{ss}}(T)
  \end{equation*}
  and $G$ (resp.\ $T$) acts on $V^{\text{s}}(G)$ (resp.\ $V^{\text{s}}(T)$) freely.
  Moreover we assume that both $V^{\text{s}}(G)/G$ and $V^{\text{s}}(T)/T$ are smooth projective varieties and the codimension of $V^{\text{us}}(G) = V \setminus V^{\text{ss}}(G)$ in $V$ is larger than or equal to two.
\end{Assumption}
These properties guarantee that the varieties $V/\!\!/G$ and $V/\!\!/T$ are smooth projective varieties. 
The abelian/nonabelian correspondence is a relation of Gromov--Witten invariants between these two different geometric quotients.
\vspace{2mm}

Let $\Phi$ be a root system corresponding to $(G,T)$.
We fix a decomposition $\Phi = \Phi_{+} \amalg \Phi_-$, where $\Phi_+$ is a set of positive roots and $\Phi_-$ is a set of negative roots. 
We define by $\mathcal{L}_{\alpha}:=V^{\text{s}}(T) \times_T L_{\alpha}$ the line bundle on $V/\!\!/T$ for each root $\alpha \in \Phi$,
where $L_{\alpha}$ is the one-dimensional representation corresponding to $\alpha$.
Let $W := N(T)/T$ be the corresponding Weyl group of $G$,
which acts naturally on $V/\!\!/T$.
We consider subspaces of $H^{\ast}(V/\!\!/T, \mathbb{C})$ defined by 
\begin{align*}
  & H^{\ast}(V/\!\!/T, \mathbb{C})^W = \{ x \in H^{\ast}(V/\!\!/T, \mathbb{C}) \mid \sigma^{\ast}x = x \text{ for all } \sigma \in W \}, \\
  & H^{\ast}(V/\!\!/T, \mathbb{C})^a = \{ x \in H^{\ast}(V/\!\!/T, \mathbb{C}) \mid \sigma^{\ast}x = (-1)^{l(\sigma)} x \text{ for all } \sigma \in W \},
\end{align*}
where $l(\sigma)$ is the length of $\sigma$.

Let $\omega := \prod_{\alpha \in \Phi_{+}} c_1(\mathcal{L}_{\alpha})$.
It is known that $\omega$ is contained in $H^{\ast}(V/\!\!/T, \mathbb{C})^a$ and any element of $H^{\ast}(V/\!\!/T, \mathbb{C})^a$ can be written as a product of $\omega$ and an element of $H^{\ast}(V/\!\!/T, \mathbb{C})^W$.

Next we relate the cohomology of $V/\!\!/G$ $( = V^{\text{s}}(G)/G)$ to that of $V/\!\!/T$ $( = V^{\text{s}}(T)/T)$ using the following diagram:
\[
\begin{CD}
  V/\!\!/T @<i << V^{\text{s}}(G)/T \\
  @. @VV\pi V \\
  @. V/\!\!/G  
\end{CD}. 
\]
In this diagram, the map $i$ is an open embedding and $\pi$ is a $G/T$-fibration.
Note that the action of $W$ on $V/\!\!/T$ stabilizes the open subset $V^{\text{s}}(G)/T$. 
We also note the following fact: 

\begin{Fact}[\cite{Mar}]
  The map $\pi$ induces an isomorphism between $H^{\ast}(V/\!\!/G, \mathbb{C})$ and $H^{\ast}(V^{\text{s}}(G)/T, \mathbb{C})^W$.
  This induces a homomorphism from $H^{\ast}(V/\!\!/T, \mathbb{C})^W$ to $H^{\ast}(V/\!\!/G, \mathbb{C})$ by $(\pi^{\ast})^{-1} \circ i^{\ast}$.
  Moreover the following sequence is exact:
  \begin{equation}\label{exseq}
    0 \longrightarrow \ker (\cup \omega) \longrightarrow H^{\ast}(V/\!\!/T, \mathbb{C})^W \xrightarrow{(\pi^{\ast})^{-1} \circ i^{\ast}} H^{\ast}(V/\!\!/G, \mathbb{C}) \longrightarrow 0,
  \end{equation}
  where $\ker (\cup \omega) = \{x \in H^{\ast}(V/\!\!/T, \mathbb{C})^W \mid x \cup \omega = 0\}$.
\end{Fact}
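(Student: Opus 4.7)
The plan is to prove the three assertions in sequence, following Martin's approach. For the first isomorphism, I apply the Leray--Hirsch theorem to the $G/T$-bundle $\pi\colon V^{\text{s}}(G)/T\to V/\!\!/G$: this gives $H^{\ast}(V^{\text{s}}(G)/T,\mathbb{C})\cong\pi^{\ast}H^{\ast}(V/\!\!/G,\mathbb{C})\otimes H^{\ast}(G/T,\mathbb{C})$, with $W$ acting fiberwise through its standard representation on $H^{\ast}(G/T,\mathbb{C})$. Borel's classical computation gives $H^{\ast}(G/T,\mathbb{C})^{W}=\mathbb{C}$ concentrated in degree zero, so taking $W$-invariants collapses the tensor product to $\pi^{\ast}H^{\ast}(V/\!\!/G,\mathbb{C})$, yielding the desired isomorphism. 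The second assertion is then essentially formal: the open embedding $i$ is $W$-equivariant since both source and target are $T$-quotients of $W$-stable open subvarieties of $V$, so $i^{\ast}$ preserves $W$-invariance and the composite $(\pi^{\ast})^{-1}\circ i^{\ast}$ is a well-defined ring homomorphism.

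For the exact sequence, surjectivity reduces via the first assertion to surjectivity of $i^{\ast}\colon H^{\ast}(V/\!\!/T,\mathbb{C})\to H^{\ast}(V^{\text{s}}(G)/T,\mathbb{C})$. This is Kirwan's surjectivity theorem: one stratifies the complement $V^{\text{ss}}(T)\setminus V^{\text{ss}}(G)$ by the Kirwan unstable strata, uses equivariantly perfect Thom isomorphisms on each stratum to show the stratification cohomologically splits, and concludes. Restriction to $W$-invariants commutes with all constructions involved, giving surjectivity of $(\pi^{\ast})^{-1}\circ i^{\ast}$ onto $H^{\ast}(V/\!\!/G,\mathbb{C})$.

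The main obstacle is the identification of $\ker((\pi^{\ast})^{-1}\circ i^{\ast})$ with $\ker(\cup\omega)$. One inclusion follows cleanly from integration over the fiber: since the vertical tangent bundle of $\pi$ splits as $\bigoplus_{\alpha\in\Phi}i^{\ast}\mathcal{L}_{\alpha}$, its Euler class equals $(-1)^{|\Phi_{+}|}i^{\ast}(\omega^{2})$ and integrates over $G/T$ to $\chi(G/T)=|W|$, yielding the push--pull identity $\pi_{\ast}i^{\ast}(x\cup\omega^{2})=\pm|W|\cdot(\pi^{\ast})^{-1}i^{\ast}(x)$ for any $x\in H^{\ast}(V/\!\!/T,\mathbb{C})^{W}$, and hence $\ker(\cup\omega)\subseteq\ker(\cup\omega^{2})\subseteq\ker((\pi^{\ast})^{-1}\circ i^{\ast})$. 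The reverse inclusion is the technical heart of the argument: starting from $i^{\ast}x=0$, one uses the closed-open exact sequence to lift $x$ to a class supported on the complement $V/\!\!/T\setminus V^{\text{s}}(G)/T$, decomposes this support via the Kirwan stratification into pieces whose normal Euler classes are products of positive root characters, and averages over $W$ to conclude that cup product with $\omega$ annihilates such classes on the $W$-invariant part. This last step, relying on a careful analysis of the unstable stratification and the explicit identification of normal-direction data with the roots in $\Phi_{+}$, is where the real geometric content of Martin's theorem lies and is the hardest part of the proof.
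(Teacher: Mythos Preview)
The paper does not prove this statement: it is recorded as a \emph{Fact} and attributed to Martin \cite{Mar}, with no argument supplied in the paper itself. There is therefore no ``paper's own proof'' to compare your proposal against.

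Your outline is a reasonable sketch of Martin's argument, and the first two assertions are handled correctly. For the exact sequence, however, your treatment of the kernel is more elaborate than what Martin actually does. Martin's identification of $\ker((\pi^{\ast})^{-1}\circ i^{\ast})$ with $\ker(\cup\,\omega)$ does not proceed by lifting classes to the complement and analyzing the Kirwan stratification of $V/\!\!/T\setminus V^{\text{s}}(G)/T$; rather, it goes through the integration formula relating $\int_{V/\!\!/G}$ and $\int_{V/\!\!/T}$ (which you essentially state) together with nondegeneracy of the Poincar\'e pairing on $V/\!\!/T$. Concretely, if $x\in H^{\ast}(V/\!\!/T,\mathbb{C})^{W}$ has $i^{\ast}x=0$, then for every $y\in H^{\ast}(V/\!\!/T,\mathbb{C})^{W}$ one has $\int_{V/\!\!/T}(x\cup\omega)\cup(y\cup\omega)=\pm|W|\int_{V/\!\!/G}((\pi^{\ast})^{-1}i^{\ast}x)\cup((\pi^{\ast})^{-1}i^{\ast}y)=0$; since the classes $y\cup\omega$ span the anti-invariants $H^{\ast}(V/\!\!/T,\mathbb{C})^{a}$ and $x\cup\omega$ itself lies in that subspace, Poincar\'e duality on $V/\!\!/T$ (restricted to the anti-invariant summand, which is orthogonal to its complement under the intersection pairing) forces $x\cup\omega=0$. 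This is both shorter and avoids the stratification analysis you describe as ``the hardest part of the proof.''
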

If $\tilde{\sigma} \in H^{\ast}(V/\!\!/T, \mathbb{C})^W$ and $\sigma \in H^{\ast}(V/\!\!/G, \mathbb{C})$ satisfy
\begin{equation*}
i^{\ast} \tilde{\sigma} = \pi^{\ast} \sigma, 
\end{equation*}
we say that $\tilde{\sigma}$ is a lift of $\sigma$.  
Obviously any $\sigma \in H^{\ast}(V/\!\!/G, \mathbb{C})$ has a lift by (\ref{exseq}). 
Let $T_0 = 1, T_1, \dots, T_{l}, T_{l+1}, \dots, T_{m}$ be a homogeneous basis of $H^{\ast}(V/\!\!/G, \mathbb{C})$, where $T_1,\dots,T_{l}$ generate $H^2(V/\!\!/G, \mathbb{C})$.
In the following, we fix a lift of $T_i$ for each $i$,
and denote it by $\tilde{T}_i$. 
From this, we define the lift of any class $T = \sum_{i=0}^{m} t_i T_i$ by $\tilde{T} = \sum_{i=0}^{m} t_i \tilde{T}_i$.
This defines an injective $\mathbb{C}$-linear map $\iota : H^{\ast}(V/\!\!/G, \mathbb{C}) \rightarrow H^{\ast}(V/\!\!/T, \mathbb{C})^W$. 

Since any $x \in H^{\ast}(V/\!\!/T, \mathbb{C})^a$ can be decomposed to $x = x' \cup \omega$ with $x' \in H^{\ast}(V/\!\!/T, \mathbb{C})^W$, the exact sequence (\ref{exseq}) implies that there exists a unique $\sigma \in H^{\ast}(V/\!\!/G, \mathbb{C})$ such that
\begin{equation}\label{liftnote}
x = \iota (\sigma) \cup \omega. 
\end{equation}
We denote the $\sigma$ above by $\sigma = x/\omega$. 



We denote by $Q^{\tilde{d}} \in \Lambda_{V/\!\!/T}$ (resp. ${\tt Q}^{d} \in \Lambda_{V/\!\!/G}$)
the Novikov variable for $\tilde{d} \in \NE(V/\!\!/T)$ (resp. $d \in \NE(V/\!\!/G)$).
Assumption \ref{assabnonab} implies that there is a unique lifting homomorphism $\Pic(V/\!\!/G) \rightarrow \Pic(V/\!\!/T)^W$.
We also consider the composition $\Pic(V/\!\!/G)_{\mathbb{R}} \rightarrow \Pic(V/\!\!/T)_{\mathbb{R}}^W \hookrightarrow \Pic(V/\!\!/T)_{\mathbb{R}}$. 
Taking the dual of the ample cones in $\Pic(V/\!\!/G)_{\mathbb{R}}$ and $\Pic(V/\!\!/T)_{\mathbb{R}}$ and restricting them to integral classes,
we have the homomorphism of semigroups $h : \NE(V/\!\!/T) \rightarrow \NE(V/\!\!/G)$.
Using this homomorphism, we have the ring homomorphism $p : \Lambda_{V/\!\!/T} \rightarrow \Lambda_{V/\!\!/G}$ 
defined by 
\begin{equation*}
  p(Q^{\tilde{d}}) = (-1)^{\epsilon(\tilde{d})} {\tt Q}^{h(\tilde{d})} \quad (\tilde{d} \in \NE(V/\!\!/T)),
\end{equation*}
where $\epsilon(\tilde{d}) = \int_{\tilde{d}} \sum_{\alpha \in \Phi_+}c_1(\mathcal{L}_{\alpha})$. 

We define an $\mathcal{E}_G$-twisted $I$-function for a globally generated vector bundle $\mathcal{E}_G$ over $V/\!\!/G$ which is induced by a representation of $G$ as follows:
Let $E$ be a representation of $G$ and consider the induced vector bundle $\mathcal{E}_G := V^{\text{s}}(G) \times_G E$ on $V/\!\!/G$. 
Restricting the action of $G$ to $T$, we decompose $E$ into one-dimensional representations $E = \bigoplus_{i=1}^r L_i$ and obtain the corresponding vector bundle $\mathcal{E}_T$ on $V/\!\!/T$.
We note that $\mathcal{E}_T = \bigoplus_{i=1}^r \mathcal{L}_i$ is a direct sum of line bundles. 
Let $I_{V/\!\!/T}^{\mathcal{E}_T}(\tilde{\bm{t}}, z)$ be the $\mathcal{E}_T$-twisted $I$-function of $V/\!\!/T$ and $\mathcal{E}_T$ defined by \cite{CG}, where $\tilde{\bm{t}} \in H^{\ast}(V/\!\!/T, \mathbb{C})$.
We denote by $\partial_{\alpha}$ the vector field on $H^{\ast}(V/\!\!/T, \mathbb{C})$ corresponding to the class $c_1(\mathcal{L}_{\alpha})$ for $\alpha \in \Phi_+$. 
\begin{Definition}[\cite{CFKS}]\label{Ifct}
  We define an $\mathcal{E}_G$-twisted $I$-function by
  \begin{equation}\label{formIfct}
    I_{V/\!\!/G}^{\mathcal{E}_G}(\bm{t}, z) := \frac{1}{\omega}\biggl(\Bigl(\prod_{\alpha \in \Phi_{+}} z \partial_{\alpha}\Bigr) I_{V/\!\!/T}^{\mathcal{E}_T}(\tilde{\bm{t}}, z)\biggr)\bigg|_{Q^{\tilde{d}} = (-1)^{\epsilon(\tilde{d})}{\tt{Q}}^{h(\tilde{d})}, \tilde{\bm{t}}=\iota(\bm{t})}
  \end{equation}
  which is a function of $\bm{t} \in H^{\ast}(V/\!\!/G, \mathbb{C})$.
\end{Definition}

When $V/\!\!/G$ is a partial flag manifold of type $A$, we can obtain the $\mathcal{E}_G$-twisted $J$-function from $I_{V/\!\!/G}^{\mathcal{E}_G}$ by the following theorem:
\begin{Theorem}[\cite{CFKS}]\label{ItoJ}
  Let $V/\!\!/G$ be a partial flag manifold of type A.
  Assume that $\mathcal{E}_G$ is generated by global sections.
  Let $\bm{t} = (t_0, t_1, \dots, t_l, t_{l+1}, \dots, t_{m})$ be the coordinates on $H^{\ast}(V/\!\!/G, \mathbb{C})$ corresponding to the homogeneous basis $T_0,T_1,\dots,T_l,T_{l+1},\dots,T_{m}$ of $H^{\ast}(V/\!\!/G, \mathbb{C})$.
  Then there exists unique $C^i(\bm{t}, z) \in \Lambda_{V/\!\!/G}[\![\bm{t}, z]\!]$ $(i=0, 1, \dots, m)$ by which the $\mathcal{E}_G$-twisted $J$-function $J_{V/\!\!/G}^{\mathcal{E}_G}(\bm{\tau}, z)$ is expressed as
  \begin{equation}\label{anathm}
    J_{V/\!\!/G}^{\mathcal{E}_G}(\bm{\tau}, z) = I_{V/\!\!/G}^{\mathcal{E}_G}(\bm{t}, z) + \sum_{i=0}^{m} C^i(\bm{t}, z) z \partial_{t_i}I_{V/\!\!/G}^{\mathcal{E}_G}(\bm{t},z).
  \end{equation}
  The coordinate change $\bm{\tau} = \bm{\tau}(\bm{t})$ can be read off from the coefficient of $z^0$ in the $z$-expansion of the right hand side.
\end{Theorem}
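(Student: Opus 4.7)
The plan is to reformulate the conclusion in terms of Givental's Lagrangian cone and then transport a cone-membership statement from the abelian side $V/\!\!/T$ to the nonabelian side $V/\!\!/G$. Recall that the twisted Lagrangian cone $\mathcal{L}^{\mathcal{E}_G}_{V/\!\!/G} \subset H^{\ast}(V/\!\!/G,\mathbb{C}) \otimes \Lambda_{V/\!\!/G}[\![z,z^{-1}]\!]$ is ruled by the tangent spaces $T_{\bm{\tau}}\mathcal{L}$, each spanned over $\mathbb{C}[\![z]\!]$ by the derivatives $z\partial_{t_i} J^{\mathcal{E}_G}_{V/\!\!/G}(\bm{\tau},z)$. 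Consequently, the identity (\ref{anathm}) is equivalent to the single statement that $I^{\mathcal{E}_G}_{V/\!\!/G}(\bm{t},-z)$ lies on $\mathcal{L}^{\mathcal{E}_G}_{V/\!\!/G}$; the functions $C^i(\bm{t},z)$ and the mirror map $\bm{\tau}(\bm{t})$ are then extracted uniquely from the $z$-expansion by the standard Birkhoff-type argument. So my strategy reduces to proving this cone-membership.

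The first main step is to establish the abelian analogue: the twisted $I$-function $I^{\mathcal{E}_T}_{V/\!\!/T}(\tilde{\bm{t}},z)$ lies on $\mathcal{L}^{\mathcal{E}_T}_{V/\!\!/T}$. For type $A$, the variety $V/\!\!/T$ is a smooth toric variety (a product of projective spaces, more generally a toric tower), and $\mathcal{E}_T = \bigoplus_i \mathcal{L}_i$ is a direct sum of line bundles by construction. Since each $\mathcal{L}_i$ is a restriction of a globally generated line bundle (pulled back from $V/\!\!/G$ and its summands), I can invoke Givental's mirror theorem for toric varieties together with the quantum Lefschetz principle in the convex regime (cf.\ Theorem \ref{qL}) to obtain the cone-membership on the abelian side. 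This is the content of the toric twisted mirror theorem.

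The second main step is to transport this to the nonabelian side via the transformation prescribed in Definition \ref{Ifct}. The claim to be verified is that if $F(\tilde{\bm{t}},z)$ lies on $\mathcal{L}^{\mathcal{E}_T}_{V/\!\!/T}$ and depends on $\tilde{\bm{t}}$ only through the lifts $\iota(\bm{t})$ of $V/\!\!/G$-classes, then the class
\begin{equation*}
\frac{1}{\omega}\Bigl(\prod_{\alpha \in \Phi_{+}} z\partial_{\alpha} F\Bigr)\Big|_{Q^{\tilde{d}} = (-1)^{\epsilon(\tilde{d})}{\tt Q}^{h(\tilde{d})},\ \tilde{\bm{t}}=\iota(\bm{t})}
\end{equation*}
lies on $\mathcal{L}^{\mathcal{E}_G}_{V/\!\!/G}$. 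Here the operator $\prod_{\alpha\in \Phi_+} z\partial_{\alpha}$ projects onto the Weyl-anti-invariant part $H^{\ast}(V/\!\!/T,\mathbb{C})^{a}$, division by $\omega$ implements the isomorphism with $H^{\ast}(V/\!\!/G,\mathbb{C})$ via (\ref{liftnote}), and the Novikov substitution with the sign $(-1)^{\epsilon(\tilde d)}$ accounts for the Euler class of the tangent bundle along the $G/T$-fibration $\pi\colon V^{\mathrm s}(G)/T \to V/\!\!/G$ restricted to stable maps. Proving this cone transport is the main obstacle. The cleanest route is to realize both sides inside a quasi-map moduli problem: for type $A$, quasi-maps to $V/\!\!/G$ have a GIT-comparison with quasi-maps to $V/\!\!/T$, and applying virtual localization with respect to the residual $W$-symmetry together with the wall-crossing from the small $I$-function chamber to the $J$-function chamber yields the cone statement. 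The role of type $A$ is essential at this point, because it guarantees that the fixed loci of the residual torus on quasi-maps to $V/\!\!/G$ are again quasi-maps to toric factors, enabling the Euler-class computation that produces exactly the factor $\omega$ and the sign $(-1)^{\epsilon(\tilde d)}$.

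Assembling these steps, cone-membership of $I^{\mathcal{E}_T}_{V/\!\!/T}$ combined with the transport statement gives cone-membership of $I^{\mathcal{E}_G}_{V/\!\!/G}$, hence the representation (\ref{anathm}); uniqueness of the coefficients $C^{i}$ and readability of $\bm{\tau}(\bm{t})$ from the $z^{0}$-coefficient follow from the standard fact that the $z\partial_{t_i} I^{\mathcal{E}_G}_{V/\!\!/G}$ form a free $\Lambda_{V/\!\!/G}[\![\bm{t},z]\!]$-basis of $T_{\bm{\tau}(\bm{t})}\mathcal{L}$ modulo $z$-positive terms.
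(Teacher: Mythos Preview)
The paper does not prove Theorem \ref{ItoJ}; it is simply quoted from \cite{CFKS} as an input, so there is no in-paper proof to compare your proposal against.

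As for the proposal itself: the Lagrangian-cone reformulation and the reduction to cone-membership of $I^{\mathcal{E}_G}_{V/\!\!/G}$ is exactly how \cite{CFKS} frames the problem, and your Step~1 (toric twisted mirror theorem on $V/\!\!/T$) is correct in spirit. However, your Step~2 departs from what \cite{CFKS} actually does. You invoke quasi-map moduli, virtual localization, and wall-crossing between $I$- and $J$-chambers; that machinery belongs to later work of Ciocan-Fontanine--Kim and is not the argument in \cite{CFKS}. The original proof of the cone transport in \cite{CFKS} goes through a comparison of Frobenius structures, relying on the earlier identification of genus-zero Gromov--Witten invariants of $V/\!\!/G$ with Weyl-anti-invariant invariants of $V/\!\!/T$ due to Bertram--Ciocan-Fontanine--Kim \cite{BCFK}; the operator $\prod_{\alpha\in\Phi_+} z\partial_\alpha$ and the sign $(-1)^{\epsilon(\tilde d)}$ arise there from matching Frobenius potentials, not from a quasi-map fixed-locus computation. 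One further caution: your claim that each $\mathcal{L}_i$ is globally generated because it is ``pulled back from $V/\!\!/G$'' is incorrect---the $\mathcal{L}_i$ come from $T$-weights of $E$ and need not individually be nef even when $\mathcal{E}_G$ is globally generated, so the convexity hypothesis in the abelian quantum Lefschetz step requires more care than you indicate.
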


  We will apply this theorem to Grassmannians $V/\!\!/G = G(k, n)$ and globally generated vector bundles $\mathcal{E}_G$ which are induced by representations and define Calabi--Yau $3$-folds $Y$ of Picard number one.

  In the following calculation, we restrict the parameter $\bm{t}$ to the small parameter space $t \in H^2(V/\!\!/G, \mathbb{C})$.
  In the small parameter space, we can express the $\mathcal{E}_T$-twisted $I$-function by combinatorial data of the toric variety $V/\!\!/T$ and the vector bundle $\mathcal{E}_{T}$ from \cite{Giv2}.
  It is known (see \cite[Section D]{CCGK}) that $I_{V/\!\!/G}^{\mathcal{E}_G}(t, z)$ becomes homogeneous of degree one under the following  degrees:
  \begin{equation*}
    \deg z = 1,\quad \deg T_i = a \quad \text{when } T_i \in H^{2a}(V/\!\!/G, \mathbb{C}).
  \end{equation*}
  Let $H$ be the class of the Schubert divisor on $G(k,n)$. 
  Then we can expand $ I_{V/\!\!/G}^{\mathcal{E}_G}(t, z) $ as 
  \begin{equation*}
    I_{V/\!\!/G}^{\mathcal{E}_G}(t, z) = I_0(t) z + I_1(t) H + O(z^{-1}). 
  \end{equation*}
  By the property $z \partial_{t_0} I_{V/\!\!/G}^{\mathcal{E}_G}(\bm{t}, z) = I_{V/\!\!/G}^{\mathcal{E}_G}(\bm{t}, z)$ and the uniqueness of $C^i(\bm{t}, z)$ $(i = 0, \dots, m)$, 
  the transformation (\ref{anathm}) in small parameter space can be written as  
  \begin{equation*}
    J_{V/\!\!/G}^{\mathcal{E}_G}(\tau, z) = \frac{I_{V/\!\!/G}^{\mathcal{E}_G}(t, z)}{I_0(t)},
  \end{equation*}
  where the coordinate change $\tau = (I_1(t) /I_0(t)) H$ is called the mirror map.

  Similar to $\tilde{J}^{\mathcal{E}_G}_{V/\!\!/G}$, we will denote by $\tilde{I}_{V/\!\!/G}^{\mathcal{E}_G}$ the composition of $I_{V/\!\!/G}^{\mathcal{E}_G}$ with the natural projection $H^{\ast}(V/\!\!/G, \mathbb{C}) \rightarrow H^{\ast}(V/\!\!/G, \mathbb{C}) / \ann(e(\mathcal{E}_G))$.

\subsection{$I$-functions and quantum differential equations}\label{subsec_I&QDE}
Let $X = G(k,n)$ be the Grassmannian of $k$-planes in $\mathbb{C}^n$. We describe $X$ by the geometric invariant theory quotient $V/\!\!/G$ with $V = \Mat_{k \times n}(\mathbb{C})$, $G = GL(k, \mathbb{C})$ and a suitable choice of $\chi$. 
It is well-known that any irreducible representation of $GL(k, \mathbb{C})$ is given by applying the Schur functor to the standard representation.
The tautological subbundle $\mathcal{S}$ is a vector bundle over $G(k,n)$ associated to the standard representation of $GL(k, \mathbb{C})$,
and hence the dual $\mathcal{S}^*$ corresponds to the dual of the standard representation.
A globally generated vector bundle associated to an irreducible representation of $G$ is described as $\Sigma^{\lambda}\mathcal{S}^{\ast}$, where $\Sigma^{\lambda}$ is the Schur functor with $\lambda = (\lambda_1, \lambda_2, \dots, \lambda_k)$ ($\lambda_1 \geq \lambda_2 \geq \cdots \geq \lambda_k \geq 0$).
The abelian/nonabelian correspondence applies to the twisted $I$-functions with vector bundles of the form
\begin{equation}\label{sdvbdl}
 \mathcal{E} = \bigoplus_{i=1}^r \Sigma^{\lambda_i}\mathcal{S}^{\ast},
\end{equation}
where $\lambda_i = (\lambda_{i,1},\lambda_{i,2},\dots, \lambda_{i,k})$ for $i = 1,2, \dots, r$. 
Now let us collect necessary data for our calculations of $I$-functions. 

\begin{enumerate}
\item[(i)](Associated toric variety) Let $T = (\mathbb{C}^{\ast})^k$ be the maximal torus of $GL(k,\mathbb{C})$ consisting of diagonal matrices.
  Then $V/\!\!/T = \underbrace{\mathbb{P}^{n-1} \times \cdots \times \mathbb{P}^{n-1}}_{k}$. 
\item[(ii)](The fundamental Weyl anti-invariant class) Let $W = \mathfrak{S}_k$ be the Weyl group of $GL(k, \mathbb{C})$.
  It acts on $V/\!\!/T= \mathbb{P}^{n-1} \times \cdots \times \mathbb{P}^{n-1}$ by permuting $k$ factors. 
  The anti-invariant class $\omega$ is given by $\prod_{1 \leq i < j \leq k} (H_i - H_j)$,
  where $H_i$ is the pullback of the hyperplane class of the $i$-th factor. 
\item[(iii)](Restriction of the coordinates) Let $H$ be the class of the Schubert divisor of $G(k,n)$.
  The Weyl invariant lift of $H$ is given by $H_1 + \cdots + H_k$.
  We have an isomorphism $H^{2}(V/\!\!/G, \mathbb{C}) \cong H^{2}(V/\!\!/T, \mathbb{C})^W$ which maps $tH \in H^2(V/\!\!/G,\mathbb{C})$ to $t(H_1 + \cdots + H_k) \in H^{2}(V/\!\!/T, \mathbb{C})^W$.
\item[(iv)](Novikov rings) Let $C_1, \dots, C_k$ be the dual basis of $H_1, \dots, H_k$ with respect to the pairing between $H_2(V/\!\!/T, \mathbb{C})$ and $H^2(V/\!\!/T, \mathbb{C})$.
  We denote by $Q_i = Q^{C_i}$.
  Then the Novikov ring of $V/\!\!/T$ is isomorphic to $\mathbb{C}[\![Q_1,\dots, Q_k]\!]$.
  Similarly we have $\Lambda_{V/\!\!/G} = \mathbb{C}[\![{\tt{Q}}]\!]$ for the Novikov ring of $V/\!\!/G$.
  Note that $\Lambda_{V/\!\!/G}$ is generated by a single generator ${\tt{Q}}$ since $V/\!\!/G$ has Picard number one. 
  Then the ring homomorphism $p : \Lambda_{V/\!\!/T} \rightarrow \Lambda_{V/\!\!/G}$ is given by $p(Q_i) = (-1)^{k-1}{\tt{Q}}$.
\item[(v)](Decomposition of vector bundles) The irreducible vector bundle $\mathcal{E}_G = \Sigma^{\lambda}\mathcal{S}^{\ast}$ on $X$ decomposes into direct sums of line bundles $\mathcal{E}_T$ determined by weights of the irreducible representation of $G$.
  For example if $\mathcal{E}_G = \mathcal{S}^{\ast}$, then $\mathcal{E}_T = \bigoplus_{i=1}^k \mathcal{O}(0,\dots,0,\overset{i}{\check{1}},0,\dots,0)$.
\end{enumerate}

\vspace{4mm}
With the above data, we can apply the abelian/nonabelian correspondence to the vector bundle of the form (\ref{sdvbdl}) over $G(k,n)$.
For vector bundles of the form 
\begin{equation}\label{qvbdl}
  \mathcal{E} = \bigoplus \Sigma^{\lambda_i} \mathcal{Q},
\end{equation}
i.e.\ those constructed by universal quotient bundle $\mathcal{Q}$, 
we can use the duality $G(k,n) \cong G(n-k,n)$ to reduce (\ref{qvbdl}) to the case (\ref{sdvbdl}) above. 

Using either the representation (\ref{sdvbdl}) or (\ref{qvbdl}), we can calculate the twisted $I$-functions for most of Calabi--Yau 3-folds in Table \ref{cylist} except for the following three cases
\begin{align*}
  &\text{No}.\ \linseq: \mathcal{S}^{\ast}(1) \oplus \wedge^4 \mathcal{Q} \text{ on } G(2,7),\\
  &\text{No}.\ \dnc: \Sym^2 \mathcal{S}^{\ast} \oplus \wedge^5 \mathcal{Q} \text{ on } G(2,8),\\
  &\text{No}.\ \sqmix: \wedge^2 \mathcal{S}^{\ast} \oplus \wedge^3 \mathcal{Q} \oplus \mathcal{O}(1)^{\oplus 2} \text{ on } G(3,7).
\end{align*}
Since a Calabi--Yau $3$-fold of No.\ $\linseq$ is a deformation equivalent to a linear section Calabi--Yau $3$-fold in $G(2,7)$ by \cite[Proposition 5.1]{IIM}, the calculation of the first case reduces to the calculation of No.\ 12.
We can calculate the twisted $I$-function of a Calabi--Yau $3$-fold of No.\ 12 by the abelian/nonabelian correspondence.
We will consider the last two cases in the subsequent sections.
  
For the rest of Calabi--Yau 3-folds in Table \ref{cylist}, it is straightforward to determine the $I$-functions.
Since there is no difference in the calculations, we only present the details for a selected example.
\begin{Example}\label{exana}
  Consider a Calabi--Yau 3-fold $Y$ of type No.\ 7 in Table \ref{cylist}.
  It is given by the zero locus of a general section of $\mathcal{E}_G = \mathcal{S}^{\ast}(1) \oplus \mathcal{O}(1)^{\oplus 3}$ on $V/\!\!/G = G(2,6)$.
  Let $T = (\mathbb{C}^{\ast})^2$ be a maximal torus of $GL(2, \mathbb{C})$.
  Then the abelian quotient is $V/\!\!/T = \mathbb{P}^5 \times \mathbb{P}^5$.
  The decomposition of $\mathcal{E}_T$ is given by $\mathcal{E}_T = \mathcal{O}(2,1) \oplus \mathcal{O}(1,2) \oplus \mathcal{O}(1,1)^{\oplus 3}$.
  For each Schubert class $\sigma \in H^{\ast}(G(2,6), \mathbb{C})$, we fix a lift of it by the corresponding Schur polynomial of $H_1,H_2$, where $H_i$ is the hyperplane class of the $i$-th component.

  \vspace{2mm}
  Step 1.
  We introduce affine coordinates $\tilde{t}_1, \tilde{t}_2$ which correspond to the basis $H_1, H_2 \in H^{2}(\mathbb{P}^5 \times \mathbb{P}^5, \mathbb{C})$.
  The $\mathcal{E}_T$-twisted small $I$-function is given by the following formula (see \cite{Giv2}):
 \begin{equation*}
   I_{\mathbb{P}^5 \times \mathbb{P}^5}^{\mathcal{E}_T}(\tilde{t}_1, \tilde{t}_2, z) = z e^{(\tilde{t}_1 H_1 + \tilde{t}_2 H_2)/z} \sum_{d_1, d_2 \geq 0} I_{(d_1,d_2)}^{\mathcal{E}_T}e^{d_1 \tilde{t}_1} e^{d_2 \tilde{t}_2}Q_1^{d_1}Q_2^{d_2}, 
 \end{equation*}
 where 
 \begin{equation*}
   I_{(d_1,d_2)}^{\mathcal{E}_T}=\frac{\prod_{m=1}^{2d_1 + d_2}(2H_1 + H_2 + m z)
   \prod_{m=1}^{d_1 + 2d_2}(H_1 + 2H_2 + mz) \prod_{m=1}^{d_1 + d_2}(H_1
   + H_2 + mz)^3}{\prod_{m=1}^{d_1}(H_1 + m z)^6\prod_{m=1}^{d_2}(H_2 + m z)^6}.
 \end{equation*}
 We also introduce an affine coordinate $t$ in $H^2(G(2,6), \mathbb{C})$ which corresponds to the class of the Schubert divisor $H$. By Definition \ref{Ifct}, we have 
 \begin{equation*}
  I_{G(2,6)}^{\mathcal{E}_G}(t,z) = \frac{1}{H_1-H_2} z e^{t (H_1+H_2)/z} \sum_{d = 0}^{\infty} (-1)^{d}{\tt{Q}}^{d}q^d \sum_{\substack{d_1 + d_2 = d\\d_1 ,d_2 \geq 0}}(H_1 - H_2 + z(d_1 - d_2)) I_{(d_1,d_2)}^{\mathcal{E}_T}, 
 \end{equation*}
 where $q = e^t$.

 \vspace{2mm}
 Step 2.
 We expand $\tilde{I}_{G(2,6)}^{\mathcal{E}_G}(t,z)$ with respect to $z$.
 Then we obtain the expansion of the form $\tilde{I}_{G(2,6)}^{\mathcal{E}_G}(t,z) = I_0(t) z + I_1(t) H + I_2(t) H^2 z^{-1} + I_3(t) H^3 z^{-2}$, where $H$ is the class of the Schubert divisor of $G(2,6)$ in $H^{\ast}(G(2,6), \mathbb{C}) / \ann(e(\mathcal{E}_G))$.
 We set ${\tt{Q}} = 1$.
 The functions $I_0(t)$, $I_1(t)$ are given by
 \begin{align*}
   I_0(t) &= 1 + 7q + 199q^2 + 8359q^3 + \cdots, \\
   I_1(t) &= I_0(t) t + 21q + \frac{1431}{2}q^2 + \frac{64373}{2}q^3 + \cdots.
 \end{align*}
 The function $I_0$ is expected to coincide with a period integral of a mirror family $\{Y_q^{\vee}\}_{q \in \mathbb{P}^1}$ for a monodromy invariant cycle $\gamma$ around the so-called maximally unipotent monodromy point.
 We look for a differential operator of fourth order which annihilates $\tilde{I}_{G(2,6)}^{\mathcal{E}_G}$ assuming the following general form of the operator with polynomial coefficients of degree at most $d$, 
 \begin{equation*}
   P = \sum_{i=0}^4 a_i(q) \theta^i, \; a_i(q) = \sum_{j=0}^d a_{i,j}q^j, 
 \end{equation*}
 where $a_{i,j}$ are unknown constants and $\theta = q \frac{d}{dq}$. Applying $P$ to $I_0$, we obtain sufficient numbers of linear equations $\{L_m(a_{ij})\}$ for $a_{ij}$.
 Since we can calculate $I_0$ for arbitrary degree, we can find the differential operator $P$ which annihilates $I_0$ (see Appendix \ref{pflist}). 
\end{Example}

\section{Quantum differential equations via quantum Lefschetz theorem}\label{local}
As explained in Subsection \ref{subsec_I&QDE}, we cannot apply the abelian/nonabelian correspondence to the Calabi--Yau 3-folds of type No.\ \dnc \ and No.\ \sqmix \ in Table \ref{cylist}. 
To determine an $\mathcal{E}$-twisted $I$-function for No.\ \sqmix,
we will take another approach which is similar to Tj\o tta's work \cite{Tjo2}. 
This approach can be applied to the homogeneous vector bundles $\mathcal{E}$ in Table \ref{cylist} 
which splits into $\mathcal{E} = \mathcal{E}' \oplus \mathcal{H}$, where
$\mathcal{E}'$ does not contain line bundles in its direct summand and $\mathcal{H} =  \bigoplus_{i=1}^r \mathcal{O}(d_i)$ $(r \geq 1)$. 
\subsection{Quantum Lefschetz theorem}
Let us recall the following theorem:
\begin{Theorem}[Quantum Lefschetz theorem \cite{Kim}, \cite{Lee}, \cite{CG}]\label{qL}
  Let $X$ be a smooth projective variety and $\mathcal{E}'$ be a holomorphic vector bundle on $X$.
  Let $\mathcal{H} = \bigoplus_{i=1}^r \mathcal{L}_i$ be a direct sum of line bundles.
  We assume that both $\mathcal{E}'$ and $\mathcal{H}$ are globally generated.
  Let $J_{X}^{\mathcal{E}'}(\bm{t}, z) = \sum_{d \in \NE(X)} J_d(\bm{t}, z) Q^d$ be the $\mathcal{E}'$-twisted $J$-function.
  We define
 \begin{equation}\label{qlt} 
   I^{\mathcal{H}}_{\mathcal{E}'}(\bm{t}, z) := \sum_{d \in \NE(X)} \prod_{i=1}^r \prod_{m=1}^{d . c_1(\mathcal{L}_i)}(c_1(\mathcal{L}_i) + mz)J_d(\bm{t}, z)Q^d
 \end{equation}
 as a modification of $J_{X}^{\mathcal{E}'}$.
 Then $J_{X}^{\mathcal{E}' \oplus \mathcal{H}}(\bm{\tau}, -z)$ and $I_{\mathcal{E}'}^{\mathcal{H}}(\bm{t}, -z)$ generate the same Lagrangian cone in the semi-infinite symplectic space $(H(\!(z^{-1})\!), \Omega)$ for $H = H^{\ast}(X, \mathbb{C})$ and a suitable symplectic form $\Omega$.
 In particular, the same relation as $(\ref{anathm})$ holds between $J_{X}^{\mathcal{E}' \oplus \mathcal{H}}$ and $I_{\mathcal{E}'}^{\mathcal{H}}$.
\end{Theorem}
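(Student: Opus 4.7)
The plan is to reduce the statement to the case where $\mathcal{H} = \mathcal{L}$ is a single globally generated line bundle with $c := c_1(\mathcal{L})$, since twisting by a direct sum factors as an iteration of single-line-bundle twists and the modification factor in $(\ref{qlt})$ is multiplicative over the summands of $\mathcal{H}$. Once this reduction is accomplished, the task becomes comparing the overruled Lagrangian cone $\mathcal{L}^{\mathcal{E}'} \subset (H(\!(z^{-1})\!), \Omega)$ on which $J_{X}^{\mathcal{E}'}(\bm{t},-z)$ lies with the cone $\mathcal{L}^{\mathcal{E}' \oplus \mathcal{L}}$, and showing that the recipe $J_d \mapsto \prod_{m=1}^{d.c}(c+mz)\, J_d$ sends a point of the former to a point of the latter.

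The core technical step is an auxiliary $\mathbb{C}^{\ast}$-action by fiberwise scaling on $\mathcal{L}$ with equivariant parameter $\lambda$. Working with the equivariant Euler class $e^{\mathbb{C}^{\ast}}(\mathcal{L}'_{0,n,d})$ on the moduli space $X_{0,n,d}$ interpolates between two specializations: as $\lambda \to \infty$ one recovers, after a suitable renormalization by the top equivariant Chern class, the $\mathcal{E}'$-twisted theory, while at $\lambda = 0$ one recovers the $(\mathcal{E}' \oplus \mathcal{L})$-twisted theory. Atiyah--Bott localization on the universal curve $\pi \colon \mathcal{C}_{0,n,d} \to X_{0,n,d}$ shows that the equivariant pushforward $R^{\bullet}\pi_{\ast}\,\mathrm{ev}^{\ast}\mathcal{L}$ splits according to the weights of the induced $\mathbb{C}^{\ast}$-action; the moving part contributes precisely a factor $\prod_{m=1}^{d.c}(c + m\lambda)$, which after the identification $\lambda = z$ dictated by Givental's loop-group formalism reproduces the modification factor in $(\ref{qlt})$.

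To upgrade these pointwise identities between formal series into the Lagrangian cone equivalence, I would appeal to Coates--Givental's symplectic framework: multiplication by $\prod_{m=1}^{d.c}(c+mz)$ on the degree-$d$ component arises from the action of an explicit loop-group element on $(H(\!(z^{-1})\!),\Omega)$ built out of $c$, and the central structural result is that this element preserves the overruled Lagrangian cone condition, hence maps $\mathcal{L}^{\mathcal{E}'}$ bijectively onto $\mathcal{L}^{\mathcal{E}' \oplus \mathcal{L}}$. The main obstacle, and the heart of the argument, is verifying this symplectic statement: one must check that the modification operator has the correct asymptotic behaviour on the positive- and negative-power-of-$z$ parts, and combine this with the divisor and string equations to propagate the equality from the small-parameter locus to the whole cone. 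Iterating over the summands $\mathcal{L}_i$ of $\mathcal{H}$ then yields the theorem in its stated generality, and in particular gives the relation $(\ref{anathm})$ between $J_{X}^{\mathcal{E}' \oplus \mathcal{H}}$ and $I_{\mathcal{E}'}^{\mathcal{H}}$.
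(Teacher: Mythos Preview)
The paper does not contain a proof of this theorem: it is stated with citations to \cite{Kim}, \cite{Lee}, \cite{CG} and then used as a black box, so there is no ``paper's own proof'' to compare your proposal against. Your outline is broadly in the spirit of the Coates--Givental argument in \cite{CG} (reduce to a single line bundle, relate the two twisted theories by an explicit loop-group/symplectic operator acting on the overruled Lagrangian cone), and that is indeed where the cited result comes from.

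One genuine inaccuracy in your sketch, though: the hypergeometric factor $\prod_{m=1}^{d.c}(c+mz)$ does \emph{not} arise by ``identifying $\lambda = z$'' for an auxiliary $\mathbb{C}^{\ast}$-equivariant parameter $\lambda$. In \cite{CG} the variable $z$ is the loop parameter present from the outset in Givental's symplectic space, while the equivariant parameter $\lambda$ (when it is introduced at all) is a separate regulator used to make the Euler-class twist invertible and is eventually sent to $0$ in the nonequivariant limit. The modification factor $\prod_m (c+mz)$ comes instead from expanding the quantum Riemann--Roch operator $\Delta$ (built from the Chern roots of $\mathcal{L}$ and the Bernoulli numbers) applied to the $\mathcal{E}'$-twisted $J$-function; the $z$'s are those already sitting in $J$ and in $\Delta$, not a specialized equivariant parameter. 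Apart from this mis-attribution of where the $z$ comes from, your high-level plan matches the literature proof.
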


We will apply this theorem to many examples in Table \ref{cylist}, 
whose vector bundles on $X=G(k,n)$
are of the form $\mathcal{E}'\oplus \mathcal{H}$ with 
non-zero $\mathcal{H} = \bigoplus_{i=1}^r \mathcal{O}(d_i)$.
We define $\tilde{I}_{\mathcal{E}'}^{\mathcal{H}}$ as the composite function of $I_{\mathcal{E}'}^{\mathcal{H}}$ and the natural projection $H^{\ast}(X, \mathbb{C}) \rightarrow H^{\ast}(X, \mathbb{C})/\ann(e(\mathcal{E}' \oplus \mathcal{H}))$.
We consider the restriction of $\bm{t} \in H^{\ast}(X, \mathbb{C})$ to the second cohomology $t \in H^2(X, \mathbb{C})$.
We set $z = 1$ and $Q^d = 1$ for $d \in \NE(X)$.
Our purpose is to find a fourth order differential operator which annihilates $\tilde{I}_{\mathcal{E}'}^{\mathcal{H}}(t)$.
Suppose we have a differential operator $Q(q, \theta)$ which annihilates $\tilde{J}_{X}^{\mathcal{E}'}(t)$.
Since $\dim H^2(X, \mathbb{C}) = 1$ for $X = G(k,n)$, we can use the following result (\cite{BvS}) to determine the differential operator which annihilates $\tilde{I}_{\mathcal{E}'}^{\mathcal{H}}$ from $Q(q, \theta)$:
\begin{Lemma}[\cite{BvS}]\label{BvS}
  Let us write $Q(q, \theta) = \sum_{j=0}^d q^j Q_j(\theta)$ for $Q_j(\theta) \in \mathbb{Q}[\theta]$.
  Then the differential operator
  \begin{equation*}
    P(q, \theta) := \sum_{j=0}^d q^j Q_j(\theta) \prod_{i=1}^{r} \prod_{m=1}^{j} (d_i \theta + m)
  \end{equation*}
  annihilates $\tilde{I}_{\mathcal{E}'}^{\mathcal{H}}(t)$. 
\end{Lemma}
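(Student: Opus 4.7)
The plan is to verify $P\tilde{I}_{\mathcal{E}'}^{\mathcal{H}}=0$ directly from the hypothesis $Q\tilde{J}_{X}^{\mathcal{E}'}=0$ by comparing coefficients of each power of $q$. First I would work in the small-parameter regime $t\in H^2(X,\mathbb{C})$; since $\dim H^2(X,\mathbb{C})=1$ for $X=G(k,n)$, both twisted functions factor as $e^{tH}$ times a formal series in $q=e^t$ with coefficients in the finite-dimensional quotient $H^*(X,\mathbb{C})/\ann(e(\mathcal{E}'\oplus\mathcal{H}))$. Writing $\tilde{J}_{X}^{\mathcal{E}'}=e^{tH}\sum_{d\geq 0}j_d(H)q^d$, the definition \eqref{qlt} specialised at $z=1$ and $Q^d=1$ gives
\[
\tilde{I}_{\mathcal{E}'}^{\mathcal{H}}(t)=e^{tH}\sum_{d\geq 0}j_d(H)\,R_d(H)\,q^d,\qquad R_d(H):=\prod_{i=1}^{r}\prod_{m=1}^{d_i d}(d_i H+m).
\]

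Next I would exploit the fact that $\theta=\partial_t$ acts on the ansatz $e^{tH}\psi(q,H)$ via the substitution $\theta\mapsto H+\theta_q$, with $\theta_q=q\partial_q$. Under this rule, the hypothesis $Q(q,\theta)\tilde{J}_{X}^{\mathcal{E}'}=0$ becomes the recursion
\[
\sum_{j+d=D}Q_j(H+d)\,j_d(H)=0\quad\text{in }H^*(X,\mathbb{C})/\ann(e(\mathcal{E}'\oplus\mathcal{H})),\ D\geq 0,
\]
while the same substitution applied to $P(q,\theta)=\sum_j q^jQ_j(\theta)A_j(\theta)$, with $A_j(\theta)$ the product attached to $Q_j$ in the definition of $P$, shows that the $q^D$-coefficient of $e^{-tH}P\tilde{I}_{\mathcal{E}'}^{\mathcal{H}}$ equals
\[
\sum_{j+d=D}Q_j(H+d)\,A_j(H+d)\,j_d(H)\,R_d(H).
\]

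The crux of the proof is then the telescoping identity
\[
A_j(H+d)\,R_d(H)=R_{d+j}(H),
\]
obtained by reindexing: for each $i$, the factors $d_iH+(d_id+m)$ occurring in $A_j(H+d)$ slot precisely into the indices $m'=d_id+1,\ldots,d_i(d+j)$ missing from $R_d(H)$ so as to complete the product $R_{d+j}(H)$. Granting this, the $q^D$-coefficient rewrites as $R_D(H)\sum_{j+d=D}Q_j(H+d)j_d(H)=R_D(H)\cdot 0=0$ by the recursion, which proves $P\tilde{I}_{\mathcal{E}'}^{\mathcal{H}}=0$.

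The main technical obstacle is the careful bookkeeping in the telescoping identity: one must check that the inserted factors $A_j(H+d)$ interleave with $R_d(H)$ to reproduce $R_{d+j}(H)$ exactly, which is what fixes the correct form of $A_j$ and lets the argument proceed uniformly in $D$. A secondary verification is the factorisation $\tilde{I}_{\mathcal{E}'}^{\mathcal{H}}=e^{tH}\sum_dj_dR_dq^d$, which is immediate from \eqref{qlt} once the parallel factorisation of $\tilde{J}_{X}^{\mathcal{E}'}$ is in place.
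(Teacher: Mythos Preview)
The paper does not prove this lemma; it is cited from \cite{BvS} without argument, so there is no proof in the paper to compare against. Your approach is the standard one and is correct in outline, but the telescoping identity $A_j(H+d)\,R_d(H)=R_{d+j}(H)$ fails as written whenever some $d_i>1$. For fixed $i$ the factor $A_j(H+d)$ contributes $\prod_{m=1}^{j}\bigl(d_iH+d_id+m\bigr)$, i.e.\ the indices $m'=d_id+1,\ldots,d_id+j$, whereas completing $R_d(H)$ to $R_{d+j}(H)$ requires the indices $m'=d_id+1,\ldots,d_id+d_ij$. These ranges agree only if $d_i=1$; for instance with $r=1$, $d_1=2$, $j=1$, $d=0$ one has $A_1(H)R_0(H)=2H+1$ but $R_1(H)=(2H+1)(2H+2)$. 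So the sentence in which you assert that the factors ``slot precisely into the indices $m'=d_id+1,\ldots,d_i(d+j)$'' is incorrect in general.

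This gap is not really a defect of your method but points to a misprint in the displayed formula for $P(q,\theta)$: the inner product should read $\prod_{m=1}^{d_ij}(d_i\theta+m)$ rather than $\prod_{m=1}^{j}(d_i\theta+m)$. With that correction your telescoping identity $A_j(H+d)R_d(H)=R_{d+j}(H)$ holds on the nose and the rest of the argument goes through verbatim. In the paper's only application (the proof of Proposition~\ref{derivePF}, where $\mathcal{H}=\mathcal{O}(1)^{\oplus 2}$) all $d_i$ equal $1$, so the stated and corrected formulas coincide and your proof is valid there as written.
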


Considering the relation between twisted $J$-functions and quantum differential systems, we can obtain $Q(q, \theta)$ explicitly. 
The details are described in the following subsections. 

Here we remark that the rank of $P$ is larger than four in general. However we can extract a Picard--Fuchs operator from $P$ after suitable factorization of the operator.

\subsection{$J$-function and quantum differential system}
We characterize the $J$-function as a collection of flat sections of the local system associated to the quantum cohomology.
The same formula holds for $\tilde{J}^{\mathcal{E}'}_X$ if we replace Gromov--Witten invariants, quantum product and $H^{\ast}(X, \mathbb{C})$ with $\mathcal{E}'$-twisted Gromov--Witten invariants, $\mathcal{E}'$-twisted quantum product and $H^{\ast}(X, \mathbb{C})/ \ann(e(\mathcal{E}'))$ respectively.


\vspace{2mm}
Let us consider $\mathcal{M} = H^{\ast}(X, \mathbb{C})$ as an affine space $\mathbb{C}^{m+1}$ with coordinates $t_0, t_1, \dots, t_l, t_{l+1}, \dots, t_m$ by using the basis $T_0, T_1, \dots, T_l, T_{l+1}, \dots, T_m$ as in previous sections. 
Then the tangent bundle $\mathcal{T}_{\mathcal{M}}$ is a trivial bundle with fiber $H^{\ast}(X, \mathbb{C})$.
Using the quantum product, we can introduce the connection $\nabla^z$ on $\mathcal{T}_{\mathcal{M}}$ by 
\begin{equation*}
  \nabla^z_{\frac{\partial}{\partial t_i}}T_j = z^{-1} T_i \ast T_j \hspace{3mm} (i = 0,1, \dots, m).
\end{equation*}
Due to the associativity of the quantum product, it turns out that the connection $\nabla^z$ is flat and defines the corresponding flat sections $s_i(\bm{t})$ on $\mathcal{M}$. 
The following proposition is a classical results (see \cite{CK}): 
\begin{Proposition}
  The flat sections $s_i(\bm{t})$ $(0 \leq i \leq m)$ are given by  
  \begin{equation*}
    s_i(\bm{t}) = T_i + \sum_{n \geq 0}\sum_{d \in \NE(X)}\sum_{j=0}^m \frac{Q^d}{n!}\left\langle \frac{T_i}{z - \psi},T_j,\underbrace{\bm{t},\dots,\bm{t}}_{n} \right\rangle_{0,n+2,d}T^j.
  \end{equation*}
  These sections determine the $J$-function by
  \begin{equation*}
    J(\bm{t}, z) = z \sum_{i=0}^m \langle s_i, 1 \rangle T^i, 
  \end{equation*}
  where $\langle , \rangle$ is the Poincar\'e pairing on $H^{\ast}(X, \mathbb{C})$.
\end{Proposition}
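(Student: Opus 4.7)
The plan is to establish the two claims of the proposition separately, both of which are classical in genus-zero Gromov--Witten theory. The first is that the formula for $s_i$ produces $\nabla^z$-flat sections; the key tool is the topological recursion relation (TRR) in genus zero. The second expresses the $J$-function as $z \sum_i \langle s_i, 1 \rangle T^i$; the key tool is the string equation.

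For the flatness of $s_i$, I would verify the flat-section equation $\nabla^z_{\partial_{t_l}} s_i = 0$ term by term. Differentiating $s_i$ in the coordinate $t_l$ brings down one of the $\bm{t}$ insertions and replaces it by $T_l$, yielding a sum of correlators of the form $\langle \tau_\bullet(T_i), T_j, T_l, \bm{t}^n \rangle$. Expanding $\tfrac{T_i}{z - \psi} = \sum_{m \ge 0} z^{-m-1} T_i \psi^m$, the $m = 0$ piece assembles by definition of the big quantum product into the classical part of $z^{-1} T_l \ast s_i$. For each $m \ge 1$, I would apply the genus-zero TRR, which writes $\psi_1$ on a moduli space with three distinguished marked points as a sum of boundary divisors separating the first marked point from the other two. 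Pulling back the descendant class along each divisor splits the integral as a convolution of a two-point descendant correlator and a three-point quantum correlator glued at a node with $\sum_\nu T_\nu \otimes T^\nu$. Summing over degree splittings $d = d_1 + d_2$ and distributions of the free $\bm{t}$-insertions, this convolution reproduces exactly the remaining terms in the expansion of $z^{-1} T_l \ast s_i$, establishing flatness.

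For the $J$-function formula, the key observation is that $\langle T^j, 1 \rangle = \delta^{j}_{\,0}$ for the Poincar\'e-dual basis, so only the $j = 0$ summand of $s_i$ survives the pairing against the identity:
\[
\langle s_i, 1 \rangle = \eta(T_i, 1) + \sum_{n, d} \frac{Q^d}{n!} \left\langle \tfrac{T_i}{z - \psi}, 1, \bm{t}^n \right\rangle_{0, n+2, d}.
\]
The string equation for genus-zero descendant invariants states that inserting $\tau_0(1)$ at a marked point lowers the descendant power at any other marked point by one, provided the reduced moduli is stable. Applied here, it yields $\langle \tfrac{T_i}{z - \psi}, 1, \bm{t}^n \rangle_{0, n+2, d} = z^{-1} \langle \tfrac{T_i}{z - \psi}, \bm{t}^n \rangle_{0, n+1, d}$ for stable $(n, d)$. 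The unstable boundary cases $(n, d) = (0, 0), (1, 0)$ are interpreted by the standard convention that the degree-zero two-point invariant is the Poincar\'e pairing, and they contribute precisely the explicit $z \cdot 1$ and $\bm{t}$ terms of the $J$-function. Multiplying by $z$, summing over $i$ against $T^i$, and using the identity $\sum_i \eta(T_i, \alpha) T^i = \alpha$ on $H^{\ast}(X, \mathbb{C})$, one recovers the defining expression for $J_X(\bm{t}, z)$.

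The main technical obstacle is the combinatorial bookkeeping in the TRR step: the sum over boundary divisors encodes a sum over splittings $(d_1, d_2)$ of the curve class and $(A, B)$ of the free marked points, and the factorials $\frac{1}{n_1!}\frac{1}{n_2!}$ coming from $\frac{1}{n!}\binom{n}{n_1}$ must line up precisely with those in the product expansion of the quantum product, node-cohomology sum included. The secondary subtlety is using the standard conventions for unstable low-point, low-degree moduli so that the string equation holds in the relevant range and the boundary $z + \bm{t}$ terms of the $J$-function are correctly produced.
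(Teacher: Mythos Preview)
Your proposal is correct and follows the standard argument. The paper itself does not give a proof of this proposition; it simply states it as a classical result with a reference to \cite{CK}, and the TRR/string-equation argument you outline is precisely the one found there (see e.g.\ \cite[Chapter 10]{CK}).
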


For our purpose, it is sufficient to calculate the small $J$-function.
Therefore we restrict the coordinate $\bm{t}$ to $H^2(X, \mathbb{C})$ by setting $t_0 = t_{l+1} = \cdots = t_m = 0$.
Then the quantum connection $\nabla^z$ is determined by the small quantum product for divisor classes $T_i$ as follows:
\begin{equation*}
\nabla^z_{\frac{\partial}{\partial t_i}} T_j = z^{-1} T_i \ast_{\text{small}} T_j \hspace{3mm} (i = 1,\dots, l). 
\end{equation*}
In the following, we denote the small quantum product by $\ast$ for simplicity.
Since we do not use Novikov variables in the following argument, we set $Q^d = 1$ for simplicity. 

\subsection{Calculation of the quantum differential equation of $\tilde{J}_{X}^{\mathcal{E}'}$}\label{genmet}
Let $X = G(k,n)$ and $\mathcal{E}'$ be as above. We fix a homogeneous basis $T_0,T_1,\dots,T_m$ of $H^{\ast}(X, \mathbb{C})/\ann(e(\mathcal{E}'))$ such that $T_0 = 1$, $T_1$ is the class of the Schubert divisor of $X$, and $T_m$ has top degree $2( \dim X - \mathrm{rank} \ \mathcal{E}')$.
Let $T^0, T^1, \dots, T^m$ be the dual basis with respect to the $\mathcal{E}'$-twisted Poincar\'e pairing, i.e.\ $(T_i, T^j)_{e(\mathcal{E}')} = \int_{X} T_i \cup T^j \cup e(\mathcal{E}') = \delta_i^j$. 
By the definition of the $\mathcal{E}'$-twisted small quantum product, we have 
\begin{equation*}
  T_i \ast_{\mathcal{E}'} T_j = \sum_{k=0}^m \sum_{d \geq 0} \langle T_i,T_j,T_k \rangle_{0,3,d}^{\mathcal{E}'} e^{d t} T^k. 
\end{equation*}
Note that the correlators ($\mathcal{E}'$-twisted Gromov--Witten invariants) $\langle T_i,T_j,T_k \rangle_{0,3,d}^{\mathcal{E}'}$ can be nonvanishing only if the degree equality
\begin{equation}\label{constra}
  \deg T_i + \deg T_j + \deg T_k = \dim X - \text{rank } \mathcal{E}' + (c_1(X)-c_1(\mathcal{E}')) d
\end{equation}
holds.
Since $c_1(X) - c_1(\mathcal{E}') > 0$ in our settings of this section, the degrees $d$ which satisfy (\ref{constra}) are finite.
This means that the $\mathcal{E}'$-twisted small quantum product is determined by a finite number of $\mathcal{E}'$-twisted Gromov--Witten invariants. 

In order to calculate the $\mathcal{E}'$-twisted Gromov--Witten invariants, we will use the action of a maximal torus $T = (\mathbb{C}^{\ast})^n$ on $G(k,n)$. This torus action is large enough to calculate $\mathcal{E}'$-twisted Gromov--Witten invariants by the localization formula \cite{Kon} due to Kontsevich. 
\bigskip

Let $M = (M_{ij})$ be the connection matrix of $\nabla^z$ with respect to the fixed basis, i.e.\
\begin{equation}\label{qdsystem}
 \nabla^z_{\frac{d}{dt}}T_j = z^{-1} T_1 \ast_{\mathcal{E}'} T_j = z^{-1} \sum_{i=0}^m M_{ij}T_i. 
\end{equation}
  We set $z=1$.
The condition that a section $s = \sum_{i=0}^m s_i(t)T_i$ is flat is equivalent to 
\begin{equation*}
  \sum_{i=0}^m \frac{ds_i(t)}{dt}T_i = T_1 \ast_{\mathcal{E}'} \sum_{i=0}^m s_i(t)T_i, 
\end{equation*}
which entails the system of linear differential equations 
\begin{equation}\label{odesystem}
 \frac{ds_i(t)}{dt} = \sum_{j=0}^m M_{ij}s_j(t) \hspace{3mm} (i = 0,1, \dots, m). 
\end{equation}

In order to find a differential operator which annihilates $\tilde{J}^{\mathcal{E}'}_{X}$, it is enough to find a differential operator which annihilates $s_m(t)$ for any flat section $s = \sum_{i=0}^m s_i(t)T_i$.
Using (\ref{odesystem}) repeatedly, we express the higher derivatives $d^j s_m(t)/dt^j$ $(j = 0, 1, \dots, m+1)$ of $s_m(t)$ by suitable linear combinations of $s_0(t), \dots, s_m(t)$ as
\begin{equation}\label{hod}
  \frac{d^j s_m(t)}{dt^j} = \sum_{i=0}^m C_{ij}(q) s_i(t),
\end{equation}
where $C = (C_{ij})$ is a matrix of size $(m+1) \times (m+2)$ with entries in $\mathbb{C}[q]$.
This matrix has the kernel 
\begin{equation*}
  {}^t(-\det C_1, \det C_2, \dots, (-1)^{m+2}\det C_{m+2}) =: {}^t (f_0(q),f_1(q), \dots, f_{m+1}(q)),
\end{equation*}
where $C_i$ is the matrix obtained from $C$ by deleting the $i$-th column.
Evaluating the both sides of (\ref{hod}) with this kernel, we obtain
\begin{equation}\label{deriv}
\left(\sum_{i=0}^{m+1} f_i(q) \frac{d^i}{dt^i} \right)s_m(t) = 0. 
\end{equation}
This is the operator $Q(q, \theta)$ which annihilates $\tilde{J}^{\mathcal{E}'}_{X}$.
 
 \subsection{The case of $\mathcal{E} = \wedge^2 \mathcal{S}^{\ast} \oplus \wedge^3 \mathcal{Q} \oplus \mathcal{O}(1)^{\oplus 2}$}
 Here we apply the method described above for Calabi--Yau $3$-folds in
Table \ref{cylist} of the vector bundle $\mathcal{E} = \mathcal{E}' \oplus \mathcal{H}$. Although we only present the detail of the calculations for No.\ \sqmix \ in Table \ref{cylist}, the same method can be applied to other cases, too. 
 \bigskip
 
 A Calabi--Yau $3$-fold of No.\ \sqmix \ is given by the zero locus of a general section of $\mathcal{E} = \wedge^2 \mathcal{S}^{\ast} \oplus \wedge^3 \mathcal{Q} \oplus \mathcal{O}(1)^{\oplus 2}$ on $X = G(3,7)$.
 Let us denote $\mathcal{E}' = \wedge^2 \mathcal{S}^{\ast} \oplus \wedge^3 \mathcal{Q}$.
 We will determine all two point $\mathcal{E}'$-twisted Gromov--Witten invariants.
 Let $T = (\mathbb{C}^{\ast})^7$ be the maximal torus which naturally acts on $G(3,7)$.
 We use this $T$-action to compute $\mathcal{E}'$-twisted Gromov--Witten invariants. 

 Let $s_i=c_i(\mathcal{Q})$ be the $i$-th Chern class of $\mathcal{Q}$ for $1 \leq i \leq 4$.
 After some algebra, it is easy to see that we can take a basis of the $H^{\ast}(X, \mathbb{C})/\ann(e(\mathcal{E}'))$ by
 \begin{equation}\label{basis}
   T_0,T_1,\dots,T_7 = 1,s_1,s_1^2,s_2,s_1^3,s_1 s_2,s_1^4,s_1^5.
 \end{equation}
 We can calculate the twisted Poincar\'{e} pairing $(\alpha, \beta)_{e(\mathcal{E}')} = \int_{G(3,7)} \alpha \cup \beta \cup e(\mathcal{E}')$ as follows:
 \begin{align*}
   &\int_{G(3,7)} s_1^5 e(\mathcal{E}') = 66,& &\int_{G(3,7)} s_1^3 s_2 e(\mathcal{E}') = 36,& &\int_{G(3,7)} s_1 s_2^2 e(\mathcal{E}') = 20.&
 \end{align*}
 Let $T^0, \dots, T^7$ be the dual basis of $T_0, \dots, T_7$ with respect to $(,)_{e(\mathcal{E}')}$.
 The $\mathcal{E}'$-twisted small quantum product $\ast_{\mathcal{E}'}$ is defined by 
 \begin{equation*}
   T_i \ast_{\mathcal{E}'} T_j = \sum_{k=0}^7\sum_{d \geq 0} \langle T_i,T_j,T_k \rangle_{0,3,d}^{\mathcal{E}'} q^d T^k.
 \end{equation*}
 To determine the quantum connection, we need to calculate the $\mathcal{E}'$-twisted small quantum product with divisor class $T_1$.
 Using the properties of twisted Gromov--Witten invariants (see \cite{CK}), it turns out that it is enough to calculate the following two point $\mathcal{E}'$-twisted Gromov--Witten invariants $\langle T_i, T_j \rangle_{0,2,d}^{\mathcal{E}'}$:
 \begin{Lemma}\label{loc}
   For $\mathcal{E}'$-twisted Gromov--Witten invariants, we have
   \begin{align*}
     & \langle s_1^5 \rangle_{0,1,1}^{\mathcal{E}'} = 264, & & \langle s_1^2,s_1^4 \rangle_{0,2,1}^{\mathcal{E}'} = 594, & & \langle s_2,s_1^4 \rangle_{0,2,1}^{\mathcal{E}'} = 330, &  \\
     & \langle s_1^3,s_1^3 \rangle_{0,2,1}^{\mathcal{E}'} = 744, &  & \langle s_1^3,s_1 s_2 \rangle_{0,2,1}^{\mathcal{E}'} = 408, & & \langle s_1 s_2, s_1 s_2 \rangle_{0,2,1}^{\mathcal{E}'} = 224, & \\
     & \langle s_2,s_2,s_1 s_2 \rangle_{0,3,1}^{\mathcal{E}'} = 176, & & \langle s_1^3, s_1^5 \rangle_{0,2,2}^{\mathcal{E}'} = 2376, &  & \langle s_1^4, s_1^4 \rangle_{0,2,2}^{\mathcal{E}'} = 4356, & \\
     & \langle s_1^5, s_1 s_2 \rangle_{0,2,2}^{\mathcal{E}'} = 1320. 
   \end{align*}
 \end{Lemma}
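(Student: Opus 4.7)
The plan is to compute each listed invariant by Kontsevich's torus localization formula applied to the natural $T = (\mathbb{C}^\ast)^7$-action on $G(3,7)$, extended to the moduli spaces $X_{0,n,d}$ and to the obstruction bundle $\mathcal{E}'_{0,n,d}$ attached to $\mathcal{E}' = \wedge^2 \mathcal{S}^\ast \oplus \wedge^3 \mathcal{Q}$. The $T$-fixed points of $G(3,7)$ are the $35$ coordinate $3$-planes $p_I$ indexed by $3$-subsets $I \subset \{1,\dots,7\}$; the equivariant Chern roots of $\mathcal{S}^\ast$ at $p_I$ are $\{\lambda_i\}_{i \in I}$ and of $\mathcal{Q}$ are $\{\lambda_j\}_{j \notin I}$, which gives explicit expressions for the equivariant total Chern classes of $\wedge^2 \mathcal{S}^\ast$ and $\wedge^3 \mathcal{Q}$ at each fixed point. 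The $T$-invariant lines connect $p_I$ and $p_J$ whenever $|I \cap J| = 2$. For each insertion $T_i$ from (\ref{basis}) I would fix an equivariant lift whose restriction to $p_I$ is the corresponding symmetric polynomial in the $\lambda$'s.

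Next I would enumerate the $T$-fixed loci in $X_{0,n,d}$ for the relevant indices $(n,d) \in \{(1,1), (2,1), (3,1), (2,2)\}$ as decorated trees in the sense of Kontsevich: vertices tagged by fixed points of $G(3,7)$, edges tagged by $T$-invariant lines with positive multiplicities summing to $d$, and markings distributed over vertices. For degree $1$ the graph has a single edge; for degree $2$ one finds either a double cover of a single invariant line or a chain of two distinct invariant lines meeting at a shared fixed point, together with any contracted bubbles needed to stabilise the configuration.

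On each fixed locus $F$ I would apply the localization formula
\begin{equation*}
\int_{[X_{0,n,d}]^{\mathrm{vir}}} e(\mathcal{E}'_{0,n,d}) \cup \prod_a \ev_a^\ast T_{i_a} = \sum_F \frac{1}{|\mathrm{Aut}(F)|} \int_F \frac{e_T(\mathcal{E}'_{0,n,d})|_F \cdot \prod_a \ev_a^\ast T_{i_a}|_F}{e_T(N^{\mathrm{vir}}_F)},
\end{equation*}
where both the obstruction contribution and the Euler class of the virtual normal bundle decompose vertex- and edge-wise by Kontsevich's standard recipe (edge factors coming from deformations of the parametrised invariant line, vertex factors from moving the node, smoothing at nodes, and the restriction of $\mathcal{E}'$ to the image curve). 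Summing the contributions and taking the non-equivariant limit yields integers independent of $\lambda$ which should match the tabulated values.

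The main obstacle is the bookkeeping: the number of decorated graphs grows quickly with $(n,d)$, especially for the three-point degree-$1$ and two-point degree-$2$ entries, and each contribution mixes the Chern roots of $\wedge^2 \mathcal{S}^\ast \oplus \wedge^3 \mathcal{Q}$ at the vertices with automorphism and smoothing factors on the edges. In practice I would implement the sum in a computer algebra system, using the finite enumeration of decorated trees above, with the cancellation of $\lambda$-dependence serving as a strong internal consistency check on the graph enumeration and on the equivariant lifts chosen for the Schubert insertions.
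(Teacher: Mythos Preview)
Your proposal is correct and follows the same approach as the paper: both compute these invariants via Kontsevich's torus localization for the natural $(\mathbb{C}^\ast)^7$-action on $G(3,7)$, using the equivariance of $\mathcal{E}'$. The paper's own proof is terse and defers all computational detail to a companion reference, whereas you have spelled out the standard ingredients (fixed points, invariant curves, decorated-tree enumeration of fixed loci, edge/vertex decomposition of the localization contributions) and correctly anticipated that the actual evaluation is a computer-algebra exercise.
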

 \begin{proof}
   Since $\mathcal{E}'$ is an equivariant vector bundle with respect to the $T$-action on $G(3,7)$, we can calculate these numbers by using the localization formula given by \cite{Kon}. We refer \cite{Ino} for the details of this calculation.
 \end{proof}
  
 \begin{Lemma}\label{allGW}
   Any three point $\mathcal{E}'$-twisted Gromov--Witten invariants are determined uniquely from the numbers in Lemma $\ref{loc}$ by WDVV relations.
 \end{Lemma}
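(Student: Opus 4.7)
The plan is to combine the dimension axiom (\ref{constra}), the divisor equation for $T_1 = s_1$, and the WDVV associativity of the $\mathcal{E}'$-twisted small quantum product in order to propagate the initial data of Lemma~\ref{loc} to every three-point $\mathcal{E}'$-twisted Gromov--Witten invariant.

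First, since $\dim X - \operatorname{rank}\mathcal{E}' = 5$ and $c_1(X) - c_1(\mathcal{E}') = 2H$, the dimension constraint (\ref{constra}) reads $\deg T_i + \deg T_j + \deg T_k = 5 + 2d$, so for each insertion triple at most one value of $d$ can produce a non-vanishing invariant, and because every class in the basis (\ref{basis}) has $\deg \leq 5$, the relevant degrees are bounded. Hence the problem of determining all three-point invariants reduces to a finite list.

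Next, I would peel off every invariant containing $T_0$ or $T_1$ by the fundamental class axiom together with the divisor equation $\langle T_1, T_i, T_j\rangle^{\mathcal{E}'}_{0,3,d} = d\,\langle T_i, T_j\rangle^{\mathcal{E}'}_{0,2,d}$; this already recovers a large block of three-point invariants directly from the two-point data of Lemma~\ref{loc}. For the remaining three-point invariants, all of whose insertions are drawn from $\{T_2, T_3, T_4, T_5, T_6, T_7\}$, I would expand the associativity relation $(T_a \ast_{\mathcal{E}'} T_b) \ast_{\mathcal{E}'} T_c = T_a \ast_{\mathcal{E}'} (T_b \ast_{\mathcal{E}'} T_c)$ and pair with a top-degree class to obtain a polynomial identity in $q$ whose coefficients are quadratic expressions in three-point invariants. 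The pieces of these expressions involving a $T_1$-insertion or coming from strictly lower curve degree are already known, so an induction on $d$ isolates each new unknown.

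The main obstacle is the combinatorial bookkeeping: one must choose the test triples $(T_a, T_b, T_c)$ so that the resulting linear system at each fixed degree $d$ has full rank in the new unknowns, rather than leaving an unresolved subspace. Because the candidate list of possibly non-vanishing invariants is bounded by the finite range coming from (\ref{constra}), this is a finite verification that can be carried out by computer algebra; the two- and three-point numbers of Lemma~\ref{loc} are a sufficient seed, and no additional localization computations are required.
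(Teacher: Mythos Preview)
Your proposal is correct and follows essentially the same approach as the paper: both arguments invoke the WDVV relations together with the dimension constraint (\ref{constra}) and the divisor equation, and then appeal to a finite direct verification that the seed data of Lemma~\ref{loc} (including the single genuine three-point number $\langle s_2,s_2,s_1 s_2\rangle^{\mathcal{E}'}_{0,3,1}$) determine everything. The paper's proof is simply the terse version of what you spelled out, stating the WDVV identity and asserting that the resulting quadratic system can be solved uniquely from the given numbers by direct computation.
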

 \begin{proof}
   Recall that $\mathcal{E}'$-twisted Gromov--Witten invariants satisfy WDVV relations, which are quadratic relations among them.
   More precisely for any $0 \leq i, j, k, l \leq 7$ and $d \geq 0$, we have
   \begin{equation*}
     \sum_{m=0}^d \sum_{a=0}^7 \langle T_i, T_j, T_a \rangle_{0,3,m}^{\mathcal{E}'} \langle T^a, T_k, T_{l} \rangle_{0,3,d-m}^{\mathcal{E}'} = \sum_{m=0}^d \sum_{a=0}^7 \langle T_i, T_{l}, T_a \rangle_{0,3,m}^{\mathcal{E}'} \langle T^a, T_j, T_k \rangle_{0,3,d-m}^{\mathcal{E}'}.
   \end{equation*}
   We can verify directly that WDVV relations determine any other three point $\mathcal{E}'$-twisted Gromov--Witten invariants if we are given the numbers in Lemma \ref{loc}.
 \end{proof}

  \begin{Proposition}\label{deribeqde}
    The connection matrix of $\nabla^z_{\frac{d}{dt}}$ with respect to the basis $(\ref{basis})$ is given by 
    \begin{equation*}
      \nabla^z_{\frac{d}{dt}} = \frac{d}{dt} - \frac{1}{z}
      \left(\begin{smallmatrix}
        0 & 4q & 0 & 0 & 72q^2 & 40q^2 & 0 & 396q^3 \\
        1 & 0 & 9q & 5q & 0 & 0 & 132q^2 & 0 \\
        0 & 1 & 0 & 0 & 8q & 4q & 0 & 0 \\
        0 & 0 & 0 & 0 & 6q & 4q & 0 & 132q^2 \\
        0 & 0 & 1 & 0 & 0 & 0 & 0 & 0 \\
        0 & 0 & 0 & 1 & 0 & 0 & \frac{33}{2}q & 0 \\
        0 & 0 & 0 & 0 & 1 & \frac{6}{11} & 0 & 4q \\
        0 & 0 & 0 & 0 & 0 & 0 & 1 & 0
      \end{smallmatrix}\right).
    \end{equation*}
    This determines a quantum differential operator $Q(q, \theta) = \sum_{i=0}^6q^i Q_i(\theta)$ which annihilates $\tilde{J}^{\mathcal{E}'}_{X}$ $($see Appendix $ \ref{qde} $ for the explicit description of $Q_i(\theta))$.
  \end{Proposition}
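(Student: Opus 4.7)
The goal is to compute the eight columns of the matrix $M$ defined by $T_1\ast_{\mathcal{E}'}T_j=\sum_i M_{ij}T_i$. I would proceed as in Subsection \ref{genmet}: by definition
\begin{equation*}
 T_1\ast_{\mathcal{E}'}T_j=\sum_{k=0}^{7}\sum_{d\geq 0}\langle T_1,T_j,T_k\rangle_{0,3,d}^{\mathcal{E}'}\,q^d\,T^k,
\end{equation*}
and the divisor axiom gives $\langle T_1,T_j,T_k\rangle_{0,3,d}^{\mathcal{E}'}=d\cdot\langle T_j,T_k\rangle_{0,2,d}^{\mathcal{E}'}$ for $d\geq 1$, while the $d=0$ term reduces to the classical triple intersection $(T_1\cup T_j,T_k)_{e(\mathcal{E}')}$. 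Since $c_1(G(3,7))=7s_1$ and $c_1(\mathcal{E}')=c_1(\wedge^2\mathcal{S}^{\ast})+c_1(\wedge^3\mathcal{Q})=2s_1+3s_1=5s_1$, the degree constraint (\ref{constra}) with $T_1$ inserted forces $\deg T_j+\deg T_k=4+2d$, and as $\deg T_k\leq 5$ only $0\leq d\leq 3$ can contribute.

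Next I would assemble the twisted Poincar\'e pairing matrix $g_{ij}=(T_i,T_j)_{e(\mathcal{E}')}$ in closed form. Every entry is a classical number $\int_{G(3,7)}m(s_1,s_2)\cup e(\mathcal{E}')$, and by the Pieri rule one reduces it to a combination of the three numbers $\int s_1^5\,e(\mathcal{E}')=66$, $\int s_1^3s_2\,e(\mathcal{E}')=36$, $\int s_1s_2^2\,e(\mathcal{E}')=20$ recorded in the paper. Inverting $g$ expresses each $T^k$ explicitly in the basis $\{T_i\}$. The $d=1$ coefficient of each $T_1\ast_{\mathcal{E}'}T_j$ is then read off directly from Lemma \ref{loc}; the two-point invariants at $d=2$ partially appear in Lemma \ref{loc} and the remainder come from WDVV via Lemma \ref{allGW}; finally the $d=3$ two-point invariants are produced by a further application of WDVV, using the now fully known degree-$\leq 2$ three-point correlators. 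Collecting the coefficients and rewriting each $T^k$ in the basis $\{T_i\}$ yields the matrix $M$ displayed in the statement.

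For the second assertion, I would run the algorithm (\ref{hod})--(\ref{deriv}): starting from the flat-section system $ds_i/dt=\sum_j M_{ij}s_j$, iterate to express $d^j s_7/dt^j$ for $j=0,1,\dots,8$ as $\sum_i C_{ij}(q)s_i(t)$ with $C$ an $8\times 9$ matrix in $\mathbb{C}[q]$, extract the one-dimensional kernel via Cramer's rule to obtain polynomials $f_0(q),\dots,f_8(q)$ satisfying $\sum_i f_i(q)(d/dt)^i s_7=0$, and convert $d/dt$ to $\theta=q\,d/dq$ to arrive at $Q(q,\theta)=\sum_{i=0}^{6}q^iQ_i(\theta)$ after factoring out the scalar trivial-solution factor. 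The work is almost entirely bookkeeping; the main conceptual check, and the step I expect to be most delicate, is verifying that the WDVV relations in Lemma \ref{allGW} really do close up on the inputs of Lemma \ref{loc} at degrees $2$ and $3$, i.e.\ that the finitely many WDVV equations forced by the degree count are enough to pin down every two-point invariant entering $M$, with no circular dependence on the three-point data still to be computed.
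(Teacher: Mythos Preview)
Your proposal is correct and follows essentially the same route as the paper's own proof: compute $T_1\ast_{\mathcal{E}'}T_j$ from the three-point invariants supplied by Lemmas \ref{loc} and \ref{allGW} together with the twisted Poincar\'e pairing, read off the matrix $M$, and then run the elimination procedure (\ref{hod})--(\ref{deriv}) to produce $Q(q,\theta)$. The paper's proof is terser (it simply invokes Lemma \ref{allGW} and calls the rest ``straightforward''), whereas you spell out the divisor axiom, the degree bound $d\le 3$, and the r\^ole of the pairing matrix, but the underlying argument is the same; your only imprecision is the phrase ``factoring out the scalar trivial-solution factor'' --- what is actually removed is the common polynomial factor of the $f_i(q)$, as noted in the paper.
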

  \begin{proof}
    From the definition of the quantum connection, we have  
    \begin{equation*}
      \nabla^z_{\frac{d}{dt}}T_i = z^{-1} T_1 \ast_{\mathcal{E}'} T_i = z^{-1} \sum_{j=0}^7 \sum_{d \geq 0} \langle T_1, T_i, T_j \rangle_{0,3,d}^{\mathcal{E}'} q^d T^j. 
    \end{equation*}
    Using the three point $\mathcal{E}'$-twisted Gromov--Witten invariants $\langle T_1, T_i, T_j \rangle_{0,3,d}^{\mathcal{E}'}$ in Lemma \ref{allGW}, it is straightforward to obtain the matrix $M$ representing $T_1 \ast_{\mathcal{E}'}$ (see (\ref{qdsystem})). If we have a connection matrix of $\nabla^z_{\frac{d}{dt}}$, the claimed differential operator $Q(q, \theta)$ follows from (\ref{deriv}).
    In Appendix \ref{qde}, we present the explicit form of $Q(q, \theta)$ by removing the common factor of $f_i(q)$ in (\ref{deriv}). 
  \end{proof}

 \begin{Proposition}\label{derivePF}
   The $I$-function $\tilde{I}_{\mathcal{E}'}^{\mathcal{O}(1)^{\oplus 2}}$ of a Calabi--Yau $3$-fold $Y$ of type No.\ $\sqmix$ satisfies the following differential equation:
   \begin{multline*}
     \{121\theta^4-11q(434\theta^4+820\theta^3+685\theta^2+275\theta+44)\\
     +q^2(7841\theta^4+10916\theta^3+3133\theta^2-2486\theta-1320)\\
     -4q^3(1488\theta^4+4092\theta^3+6761\theta^2+5511\theta+1694)\\
     -32q^4(136\theta^4+1460\theta^3+4556\theta^2+5245\theta+2017)\\
     +256q^5(\theta+1)^2(40\theta^2+212\theta+237)-4096q^6(\theta+1)^2(\theta+2)^2\} \tilde{I}_{\mathcal{E}'}^{\mathcal{O}(1)^{\oplus 2}} = 0.
   \end{multline*}
 \end{Proposition}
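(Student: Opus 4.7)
The plan is to apply Lemma~\ref{BvS} (quantum Lefschetz) to convert the operator $Q(q,\theta)$ of Proposition~\ref{deribeqde}, which annihilates $\tilde{J}^{\mathcal{E}'}_{X}$, into an operator annihilating $\tilde{I}^{\mathcal{O}(1)^{\oplus 2}}_{\mathcal{E}'}$, and then factor out a fourth-order Picard--Fuchs operator.

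Writing $Q(q,\theta)=\sum_{j=0}^{6} q^j Q_j(\theta)$ as in Proposition~\ref{deribeqde}, and noting that $\mathcal{H}=\mathcal{O}(1)^{\oplus 2}$ gives $r=2$ and $d_1=d_2=1$, Lemma~\ref{BvS} yields
\begin{equation*}
P(q,\theta) \;=\; \sum_{j=0}^{6} q^{j}\, Q_j(\theta)\, \prod_{m=1}^{j}(\theta+m)^{2}
\end{equation*}
with $P\,\tilde{I}^{\mathcal{O}(1)^{\oplus 2}}_{\mathcal{E}'}=0$. The order in $\theta$ of this operator is well above four, so next I would extract a fourth-order right factor $P_{4}$ from $P$, i.e.\ write $P=L\cdot P_{4}$ in the Weyl algebra generated by $q$ and $\theta$; the claim of the proposition is that $P_{4}$ is precisely the displayed operator. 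The factorization is checked by direct polynomial division in the non-commutative algebra, matching coefficients of $q^{a}\theta^{b}$ on both sides to determine $L$ uniquely.

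Once the identity $P=L\cdot P_{4}$ is verified, any solution of $P_{4}=0$ is automatically a solution of $P=0$. To conclude $P_{4}\,\tilde{I}^{\mathcal{O}(1)^{\oplus 2}}_{\mathcal{E}'}=0$, I would compute the first several coefficients of $\tilde{I}^{\mathcal{O}(1)^{\oplus 2}}_{\mathcal{E}'}$ as a power series in $q$ from formula~\eqref{qlt}, using the twisted Gromov--Witten invariants of Lemma~\ref{loc} together with their WDVV-determined extensions from Lemma~\ref{allGW}, and verify $P_{4}\,\tilde{I}^{\mathcal{O}(1)^{\oplus 2}}_{\mathcal{E}'}\equiv 0\pmod{q^{N}}$ for $N$ large enough that the four-dimensional local solution space of $P_{4}$ near $q=0$ is pinned down. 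The existence of a fourth-order factor is expected on geometric grounds: the Calabi--Yau $3$-fold $Y$ has Picard number one and a four-dimensional space of monodromy-invariant periods, so its Picard--Fuchs operator should have order four. The main obstacle is the bookkeeping: $P$ has order on the order of twenty in $\theta$ and degree six in $q$, hence the right division $P=L\cdot P_{4}$ is a sizeable but mechanical symbolic computation, best performed with a computer algebra system; subtleties arise from cancellations that reduce the apparent order of the quotient.
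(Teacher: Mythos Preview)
Your overall plan---apply Lemma~\ref{BvS} to produce $P(q,\theta)=\sum_{j=0}^{6}q^{j}Q_{j}(\theta)\prod_{m=1}^{j}(\theta+m)^{2}$ annihilating $\tilde I_{\mathcal{E}'}^{\mathcal{O}(1)^{\oplus 2}}$, then extract a fourth-order right factor---is exactly the paper's approach. Two corrections and one genuine gap:

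\emph{Corrections.} The order of $P$ is $12$, not ``on the order of twenty'': the $Q_{j}$ from Appendix~\ref{qde} have $\deg Q_{j}\leq 8$ and one checks $\deg Q_{j}+2j\leq 12$ for all $j$. Also, the factorization does not stay in $\mathbb{Q}[q]\langle\theta\rangle$; the paper finds
\[
P(q,\theta)=\theta^{2}(\theta-1)^{2}\,\tfrac{1}{r(q)}\,R_{4}(q,\theta)\,S_{4}(q,\theta),
\]
with $r(q)\in\mathbb{Q}[q]$ and $R_{4}$ of order $4$, so the left cofactor has rational-function coefficients. Your ``polynomial division matching $q^{a}\theta^{b}$'' needs to allow denominators in $q$.

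\emph{Gap in the last step.} Your proposed verification---check $P_{4}\,\tilde I\equiv 0\pmod{q^{N}}$ for $N$ large enough ``that the four-dimensional local solution space of $P_{4}$ is pinned down''---is not a proof. Agreement with a solution of $P_{4}$ to finite order does not show $\tilde I$ \emph{is} a solution of $P_{4}$; the relevant operator is the left cofactor $L=\theta^{2}(\theta-1)^{2}\tfrac{1}{r(q)}R_{4}$, not $P_{4}$. A rigorous finite-check argument would read: $L(P_{4}\tilde I)=0$, so $P_{4}\tilde I$ lies in the $8$-dimensional local solution space of $L$; if $P_{4}\tilde I$ vanishes at $q=0$ to order exceeding every indicial exponent of $L$, it must be identically zero. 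The paper bypasses all of this: since $H^{\ast}(X,\mathbb{C})/\ann\!\big(e(\mathcal{E}'\oplus\mathcal{O}(1)^{\oplus 2})\big)$ has rank $4$, the twisted small $J$-function (hence $\tilde I_{\mathcal{E}'}^{\mathcal{O}(1)^{\oplus 2}}$) is governed by a rank-$4$ quantum $D$-module and therefore satisfies a fourth-order equation a priori; that fourth-order operator is then the right factor $S_{4}$ of $P$. You stated this geometric fact as motivation, but it is the actual argument, not the power-series check.
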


 \begin{proof}
   We define the differential operator 
\begin{equation*}
P(q, \theta) := \sum_{d=0}^6 q^d Q_d(\theta) \prod_{m=1}^d (\theta + m)^2. 
\end{equation*}
It follows from Lemma \ref{BvS} that $P(q, \theta)$ annihilates $\tilde{I}_{\mathcal{E}'}^{\mathcal{O}(1)^{\oplus 2}}$. 
Although $P(q, \theta)$ is of order $12$,
we find the following factorization in $\mathbb{Q}[q][\frac{d}{dq}]$: 
\begin{equation}\label{Weylfac}
P(q,\theta) = \theta^2 (\theta - 1)^2 \frac{1}{r(q)} R_4(q, \theta) S_4(q, \theta), 
\end{equation}
where $S_4(q, \theta)$ is the claimed Picard--Fuchs operator.
We refer the explicit form of $r(q)$ and $R_4(q, \theta)$ to Appendix \ref{factori}. 
Since $\tilde{J}_X^{\mathcal{E}' \oplus \mathcal{O}(1)^{\oplus 2}}$ corresponds to the flat section of a local system of rank 4,
$\tilde{I}_{\mathcal{E}'}^{\mathcal{O}(1)^{\oplus 2}}$ satisfies the fourth order differential equation which is given by $S_4(q, \theta)$. 
 \end{proof}
 
\section{No.\ $\dnc$ via the determinantal nets of conics}\label{dnc}
In this section, we consider
  the small $I$-function of a Calabi--Yau $3$-fold of No.\ $\dnc$ which is a zero locus of a general section of $\mathcal{E} = \Sym^2 \mathcal{S}^{\ast} \oplus \wedge^5 \mathcal{Q}$ on $G(2,8)$.
Neither method in Section \ref{abnonab} nor Section \ref{local} cannot be applied to this vector bundle $\mathcal{E}$.
To circumvent the situation, we note that the zero locus of a general section of $\wedge^5 \mathcal{Q}$ on $G(2,8)$ coincides with determinantal nets of conics $N$ which can be described by a geometric invariant theory quotient.
Moreover the restriction of $\Sym^2 \mathcal{S}^{\ast}$ to $N$ is a vector bundle on $N$ which is induced by a representation of the reductive algebraic group of the geometric invariant theory quotient.
If the abelian/nonabelian correspondence is true for $N$, then we can apply it for $\Sym^2 \mathcal{S}^{\ast}|_N$ and obtain 
the $\Sym^2 \mathcal{S}^{\ast}|_N$-twisted $J$-function on $N$ from the $\Sym^2 \mathcal{S}^{\ast}|_N$-twisted $I$-function. 

In this section, assuming the abelian/nonabelian correspondence for $N$, we determine the Picard--Fuchs operator corresponding to No.\ $\dnc$. 

\subsection{Construction of the determinantal nets of conics}
First we summarize the construction of the determinantal nets of conics in terms of a geometric invariant theory quotient following \cite{EPS}, \cite{Tjo1}. 

Let $F$ be a two-dimensional $\mathbb{C}$-vector space and $E$ be a three-dimensional $\mathbb{C}$-vector space.
Let $V = H^0(\mathbb{P}^2, \mathcal{O}(1))$ be the $\mathbb{C}$-vector space of linear polynomials on $\mathbb{P}^2$.
Choosing a basis of $F$ and $E$, we identify $\Hom(F,E \otimes V)$ with an affine space of $3 \times 2$ matrices having entries in $V$.
The group $GL(3, \mathbb{C}) \times GL(2, \mathbb{C})$ acts on $\Hom(F,E \otimes V)$ by 
\begin{equation*}
  (g,h) \cdot M = g M h^{-1}
\end{equation*}
for $(g,h) \in GL(3, \mathbb{C}) \times GL(2, \mathbb{C})$ and $M \in \Hom(F,E \otimes V)$.
Let $\mathbb{C}^{\ast} \cong \{ (\lambda I_3, \lambda I_2) \mid \lambda \in \mathbb{C}^{\ast}\}$ be a subgroup of the center of $GL(3, \mathbb{C}) \times GL(2, \mathbb{C})$.
Since $\mathbb{C}^{\ast}$ acts on $\Hom(F,E \otimes V)$ trivially, the above action induces an action of $G := GL(3, \mathbb{C}) \times GL(2, \mathbb{C})/ \mathbb{C}^{\ast}$ on $\Hom(F,E \otimes V)$.
We define a character $\chi:G \rightarrow \mathbb{C}^{\ast}$ by 
\begin{equation*}
  [(g,h)] \mapsto (\det g)^2 (\deg h)^{-3},
\end{equation*}
where $[(g,h)]$ is the equivalence class of $(g,h)$.
The geometric invariant theory quotient
\begin{equation*}
  N:=\Hom(F, E \otimes V)/\!\!/_{\chi}G
\end{equation*}
is known to be a six-dimensional variety. 
Furthermore the following properties are known: 
\begin{Proposition}[\cite{EPS}]\label{EPS}
  Under the above notation,
  it holds that
  \begin{enumerate}
  \item[$(i)$] $\Hom(F, E \otimes V)^{\text{ss}}(G) = \Hom(F, E \otimes V)^{\text{s}}(G)$,
  \item[$(ii)$] The action of $G$ on $\Hom (F, E \otimes V)^{\text{s}} (G)$ is fixed-point free.
  \end{enumerate}
  Moreover, $N$ is a smooth projective variety and the codimension of $ \Hom(F, E \otimes V) \setminus \Hom(F, E \otimes V)^{\text{ss}}(G)$ in $\Hom(F, E \otimes V)$ is larger than or equal to two $($cf.\ Assumption $\ref{assabnonab})$.
\end{Proposition}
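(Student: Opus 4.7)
The plan is to verify the four claims by combining the Hilbert--Mumford numerical criterion for the linearization determined by $\chi$ with an explicit analysis of the unstable locus, following \cite{EPS} and \cite{Tjo1}. First I would parametrize one-parameter subgroups $\lambda$ of $G = (GL(3, \mathbb{C}) \times GL(2, \mathbb{C}))/\mathbb{C}^{\ast}$ by pairs of integer weight vectors $(a_1, a_2, a_3)$ and $(b_1, b_2)$ in suitably chosen diagonal bases of $E$ and $F$, well-defined modulo the overall diagonal shift. Under such a $\lambda$, the entry $M_{ij} \in V$ of a matrix $M \in \Hom(F, E \otimes V)$ transforms with weight $a_i - b_j$, and the character $\chi$ carries weight $2(a_1+a_2+a_3) - 3(b_1+b_2)$. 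The Hilbert--Mumford criterion then translates $\chi$-(semi)stability of $M$ into a numerical inequality on these weights quantified over the support of $M$.

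With this setup, parts (i) and (ii) follow by a direct case analysis. For (i), a key observation is that since the integers $2$ and $3$ appearing in $\chi$ are coprime, the numerical inequality defining semistability can never be satisfied with equality for a nontrivial primitive 1-PS; this forces $\Hom(F, E \otimes V)^{\text{ss}}(G) = \Hom(F, E \otimes V)^{\text{s}}(G)$. For (ii), if $[(g,h)] \in G$ stabilizes a stable $M$, then $g$ and $h$ must each be semisimple (otherwise the unipotent part would furnish a destabilizing 1-PS), and a nontrivial semisimple stabilizer would similarly destabilize $M$ along one of its weight subtori; the stabilizer must therefore act by scalars on both $E$ and $F$, which is exactly the quotiented-out $\mathbb{C}^{\ast}$. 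Smoothness of $N$ then follows from the standard GIT package: a free action of a reductive group on a smooth quasi-projective variety gives a smooth geometric quotient, and projectivity is automatic once $\text{ss} = \text{s}$. A dimension count confirms $\dim N = 2 \cdot 3 \cdot 3 - (9 + 4 - 1) = 6$.

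The hardest part, and the main obstacle, is the codimension estimate on the unstable locus in the $18$-dimensional ambient $\Hom(F, E \otimes V)$. Here I would stratify the unstable locus by the type of maximally destabilizing flag, which via the Hilbert--Mumford analysis corresponds to matrices $M$ satisfying $M(F') \subseteq E' \otimes V$ for some flag pair $(F', E')$ whose numerics violate the balance condition imposed by $\chi = (\det g)^2 (\det h)^{-3}$. The relevant flag types include $\dim F' = 1$ with $\dim E' \leq 2$, as well as $F' = F$ with $\dim E' \leq 1$, and their transposes. For each stratum I would bound its dimension by a parameter count over the corresponding Grassmannian of flags times the affine space of matrices respecting them, then verify that every codimension is at least $2$. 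The delicate computation is the stratum of smallest codimension, where the specific coefficients $2$ and $3$ in $\chi$ enter essentially; a direct count shows that the closure of the ``generic'' destabilizing stratum (matrices whose image lies in a two-dimensional subspace of $E \otimes V$ of the appropriate shape) already has codimension exceeding one, and every deeper stratum is smaller, establishing the stated inequality and completing the verification of Assumption \ref{assabnonab} for $N$.
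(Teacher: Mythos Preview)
The paper does not prove this proposition; it is quoted from \cite{EPS}, and the subsequent lemma on the $T$-unstable locus is stated to follow from ``the proof of Proposition~\ref{EPS}'' in that reference. Your overall strategy---Hilbert--Mumford/King's criterion together with a stratification of the unstable locus---is the standard route and is essentially what \cite{EPS} and \cite{Tjo1} carry out, so there is no meaningful divergence of method to report.

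That said, your identification of the destabilizing flag types is incorrect. Translating $\chi = (\det g)^{2}(\det h)^{-3}$ into King's stability condition with weight $(\theta_E,\theta_F)=(2,-3)$, a subrepresentation $(F',E')$ with $M(F')\subseteq E'\otimes V$ destabilizes $M$ precisely when $2\dim E' < 3\dim F'$. The destabilizing dimension pairs $(\dim F',\dim E')$ are therefore $(1,0)$, $(1,1)$, $(2,0)$, $(2,1)$, $(2,2)$. Your list ``$\dim F'=1$ with $\dim E'\le 2$'' wrongly includes $(1,2)$ (here $4>3$, so this is \emph{not} destabilizing), and ``$F'=F$ with $\dim E'\le 1$'' wrongly omits $(2,2)$ (here $4<6$, so this \emph{is} destabilizing); the phrase ``and their transposes'' has no content since $\dim E\neq\dim F$. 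With the corrected list, the two maximal unstable strata are the $(1,1)$-locus and the $(2,2)$-locus, of codimensions $3$ and $4$ in the $18$-dimensional $\Hom(F,E\otimes V)$ by a direct incidence-variety count; this is exactly what underlies the description of the $T$-unstable locus recorded in the lemma immediately after the proposition. Once you fix the strata, the rest of your argument goes through.
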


Consider the standard representation $E$ of $GL(3, \mathbb{C})$, the standard representation $F$ of $GL(2, \mathbb{C})$, and the following induced representation of $G$: 
\begin{align}
  &E \otimes (\det E)^{-1} \otimes \det F,\label{etype} \\
  &F \otimes (\det E)^{-1} \otimes \det F.\label{ftype} 
\end{align}
These representations define corresponding homogeneous vector bundles $\mathcal{E}_N$ and $\mathcal{F}_N$ on $N$ respectively.
  Note that $\det \mathcal{E}_N^{\ast} = \det \mathcal{F}_N^{\ast}$ holds and the Picard group of $N$ is generated by this class. 

The variety $N$ parametrizes two-dimensional linear subsystems of conics of determinantal type. Hence there is a natural embedding
\begin{equation*}
  j:N \hookrightarrow G(3,6).
\end{equation*}
As in \cite{Tjo1}, we know $\mathcal{E}_N = j^{\ast} \mathcal{S}_{G(3,6)}$, where $\mathcal{S}_{G(3,6)}$ is the tautological subbundle on $G(3,6)$. For $\mathcal{F}_N$, we have obtained the following proposition:
\begin{Proposition}[{\cite[Proposition 6.1]{IIM}}]\label{iimres}
  There is an embedding $i:N \hookrightarrow G(2,8)$ such that
  \begin{equation}\label{ipull}
    i^{\ast} \mathcal{S}_{G(2,8)} = \mathcal{F}_N,
  \end{equation}
  where $\mathcal{S}_{G(2,8)}$ is the tautological subbundle on $G(2,8)$ and the image $i(N)$ is given by the zero locus of a general section of $\wedge^5 \mathcal{Q}$ on $G(2,8)$. 
\end{Proposition}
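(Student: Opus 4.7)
The plan is to realize $i$ as the morphism to $G(2,W)$ associated with a globally generating 8-dimensional subspace of $H^0(N,\mathcal{F}_N^{\ast})$, and then recognize $i(N)$ as the zero locus of a distinguished element of $H^0(G(2,8),\wedge^5\mathcal{Q})\cong\wedge^5 W$. The first task is to compute $H^0(N,\mathcal{F}_N^{\ast})$ using the GIT description. Since $N=\Hom(F,E\otimes V)^{\mathrm{s}}(G)/G$ and the unstable locus has codimension at least two by Proposition~\ref{EPS}, sections of $\mathcal{F}_N^{\ast}$ are exactly $G$-equivariant polynomial maps $\Hom(F,E\otimes V)\to F^{\ast}\otimes\det E\otimes(\det F)^{-1}$. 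Decomposing $\bigoplus_d \Sym^d(F\otimes E^{\ast}\otimes V^{\ast})\otimes F^{\ast}\otimes \det E\otimes(\det F)^{-1}$ as a $GL(F)\times GL(E)\times GL(V)$-representation via Cauchy/plethysm and extracting the $SL(F)\times SL(E)$-invariants of the correct center weight, I expect a canonical 8-dimensional summand; let $W$ be its dual. The resulting surjection $W^{\ast}\otimes\mathcal{O}_N\twoheadrightarrow\mathcal{F}_N^{\ast}$, equivalently the rank-$2$ inclusion $\mathcal{F}_N\hookrightarrow W\otimes\mathcal{O}_N$, defines the candidate morphism $i:N\to G(2,W)\cong G(2,8)$ with $i^{\ast}\mathcal{S}_{G(2,8)}=\mathcal{F}_N$.

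Next I would verify that $i$ is a closed embedding. Global generation of $\mathcal{F}_N^{\ast}$ by $W^{\ast}$ is a pointwise check at a single stable representative, propagated to all of $N$ by $G$-equivariance. Point separation follows because the pair $(\mathcal{E}_N,\mathcal{F}_N)$ --- equivalently, the product of Tj\o{}tta's companion embedding $j:N\hookrightarrow G(3,6)$ with the candidate $i$ --- recovers the $G$-orbit of $M$ by a direct linear-algebraic reconstruction from the two subbundles. Injectivity of the differential reduces to an equivariant comparison between the tangent sheaf of the GIT quotient, i.e.\ the cokernel of $\mathfrak{g}\otimes\mathcal{O}_N\to \Hom(F,E\otimes V)\otimes\mathcal{O}_N$, and the Grassmannian tangent sheaf $\Hom(\mathcal{F}_N,(W\otimes\mathcal{O}_N)/\mathcal{F}_N)$ pulled back along $i$.

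Finally, I would construct a section $\eta\in H^0(G(2,8),\wedge^5\mathcal{Q})=\wedge^5 W$ whose zero locus is $i(N)$. Under the identification $\wedge^5 W/(V\wedge\wedge^4 W)\cong\wedge^5(W/V)$, the section $\eta$ vanishes at $[V]$ iff $\eta\in V\wedge\wedge^4 W$. I would look for a $G$-equivariant $\eta$ coming from a distinguished invariant in $\wedge^5 W$ (for instance from a $G$-equivariant pairing of $W$ with some companion 5-dimensional representation naturally produced by the same plethysm analysis), verify $i^{\ast}\eta=0$ fiberwise, and conclude $i(N)=Z(\eta)$ from the dimension match $\dim N=6=\dim G(2,8)-\mathrm{rank}\,\wedge^5\mathcal{Q}$ together with the irreducibility of both sides; smoothness of $N$ from Proposition~\ref{EPS} then ensures $\eta$ defines a transverse section. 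The principal obstacle is the first step: the plethysm decomposition of $\Sym^{\bullet}(F\otimes E^{\ast}\otimes V^{\ast})$ contains many summands, and pinning down the one yielding the 8-dimensional $W$ with an explicit polynomial evaluation map is the technical heart of the argument; once $W$ is identified, all remaining verifications become representation-theoretic exercises on $G$.
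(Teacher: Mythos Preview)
The paper does not prove this proposition; it is quoted verbatim from the companion paper \cite{IIM} and used as a black box. There is therefore no argument here to compare your proposal against.

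That said, your outline is a sensible strategy but, as you yourself flag, it is not a proof: the identification of the $8$-dimensional $W$ inside $\bigoplus_d \Sym^d(F\otimes E^{\ast}\otimes V^{\ast})\otimes F^{\ast}\otimes\det E\otimes(\det F)^{-1}$ is left as an unspecified plethysm, and everything downstream (global generation, the embedding, the section $\eta$) depends on having $W$ in hand explicitly. Two further points deserve attention even once $W$ is known. First, your separation argument only shows that $j\times i:N\to G(3,6)\times G(2,8)$ is injective, not that $i$ itself is; you still need to rule out that two distinct $G$-orbits give the same $2$-plane in $W$ while giving different $3$-planes via $j$. Second, concluding $i(N)=Z(\eta)$ from the dimension match $\dim N=6=\dim G(2,8)-\operatorname{rank}\wedge^5\mathcal{Q}$ and irreducibility of $N$ is not enough: you also need $Z(\eta)$ irreducible (or at least connected of pure dimension $6$), which is not automatic for a section of a high-rank bundle and typically requires either a direct argument or knowing $\eta$ is general. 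If you intend to carry this through, the cleanest route is usually to produce $W$ and $\eta$ simultaneously from a single equivariant construction (e.g.\ an explicit $8$-dimensional $GL(V)$-module together with a natural invariant in $\wedge^5 W$), rather than first finding $W$ by invariant theory and then searching for $\eta$ afterwards.
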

Using Proposition \ref{iimres}, we have
\[
\begin{CD}
  \Sym^2 \mathcal{F}^{\ast}_N @. \Sym^2 \mathcal{S}^{\ast} \oplus \wedge^5 \mathcal{Q} \\
  @VVV @VVV \\
  N @>>>  G(2,8)
\end{CD}
\]
and use this diagram for computation of the small $J$-function of No.\ $\dnc$. 
We can define a $\Sym^2 \mathcal{F}^{\ast}_N$-twisted $I$-function by Definition \ref{Ifct}. 
Although Theorem \ref{ItoJ} are proved for partial flag manifolds of type $A$,
we assume that the same statement holds for determinantal nets of conics and calculate the $\Sym^2 \mathcal{F}^{\ast}_N$-twisted small $J$-function.

\begin{Remark}
  In \cite{Tjo1}, Tj\o tta studies Calabi--Yau 3-folds associated with the following vector bundles on $N$:
  \begin{align*}
    &(\text{a}) \; \mathcal{O}(1)^{\oplus 3}, & &(\text{b}) \; \mathcal{F}^{\ast}_N \oplus \mathcal{O}(2), & &(\text{c}) \; \Sym^2 \mathcal{F}^{\ast}_N. &
  \end{align*}
  The corresponding $I$-function for the case (a) has been determined using quantum Lefschetz theorem.
  Due to the relation (\ref{ipull}) and similar relations, our Calabi--Yau $3$-folds No.\ 17, 15 and $\dnc$ correspond to the cases (a), (b) and (c) respectively.
  It should be noted that quantum Lefschetz theorem does not apply to the case (c) (in this given form), since the construction of Calabi--Yau $3$-fold does not factor through a Fano manifold. 
\end{Remark}

\subsection{Conjectural abelian/nonabelian correspondence for determinantal nets of conics}
\subsubsection{Description of the abelian quotient}
Let $T = (\mathbb{C}^{\ast})^3 \times (\mathbb{C}^{\ast})^2/\mathbb{C}^{\ast}$ be a maximal torus of $G$ and $W = N(T)/T \cong \mathfrak{S}_3 \times \mathfrak{S}_2$ be the Weyl group. 
Then $W$ acts on $\Hom(F, E \otimes V)$ by permutations of rows and columns.
The following lemma follows from the proof of Proposition \ref{EPS}:
\begin{Lemma}
  It holds that $\Hom(F, E \otimes V)^{\text{ss}}(T) = \Hom(F, E \otimes V)^{\text{s}}(T)$ for the actions of $T$.
  Moreover $\Hom(F, E \otimes V)^{\text{us}} := \Hom(F, E \otimes V) \setminus \Hom(F, E \otimes V)^{\text{ss}}(T)$ is given by  
  \begin{equation*}
    \Hom(F, E \otimes V)^{\text{us}} = \bigcup_{\sigma \in W} \left( \sigma \cdot 
    \mathbb{C}^{12}_{\left( \begin{smallmatrix}
        0 & 0 \\
        \ast & \ast \\
        \ast & \ast
        \end{smallmatrix} \right)}
    \cup
    \sigma \cdot
    \mathbb{C}^{12}_{\left( \begin{smallmatrix}
        0 & \ast \\
        0 & \ast \\
        \ast & \ast
        \end{smallmatrix} \right)}
    \right),
  \end{equation*}
  where
  \begin{equation*}
    \mathbb{C}^{12}_{\left( \begin{smallmatrix}
        0 & 0 \\
        \ast & \ast \\
        \ast & \ast
      \end{smallmatrix} \right)} = \left\{
    \left(
    \begin{smallmatrix}
      0 & 0 \\
      \ell_1 & \ell_2 \\
      \ell_3 & \ell_4
    \end{smallmatrix}
    \right) \  \Big|  \ \ell_1, \dots, \ell_4: \text{linear polynomials on } \mathbb{P}^2
    \right\}
  \end{equation*}
  is the affine subspace in $\Hom(F, E \otimes V)$, and similarly for
  $\mathbb{C}^{12}_{\left( \begin{smallmatrix}
        0 & \ast \\
        0 & \ast \\
        \ast & \ast
    \end{smallmatrix} \right)}$. 
\end{Lemma}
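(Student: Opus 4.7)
The approach is to apply the Hilbert--Mumford numerical criterion for the $T$-action on the affine space $\Hom(F, E \otimes V)$. The plan is first to set up the cocharacter data: identify the cocharacter lattice of $T$ with $\mathbb{Z}^{5}/\mathbb{Z}(1,1,1;1,1)$ and record that, under $(g,h)\cdot M = g M h^{-1}$, each of the three scalar coefficients of the entry $M_{ij}$ transforms with weight $e_i - f_j$. Hence a cocharacter $\lambda = (a_1,a_2,a_3;b_1,b_2)$ admits a limit $\lim_{t \to 0}\lambda(t)\cdot M$ inside $\Hom(F,E\otimes V)$ exactly when $a_i - b_j \geq 0$ for every $(i,j) \in S(M) := \{(i,j) : M_{ij} \neq 0\}$, while $\chi|_T$ corresponds to the linear functional $(a;b) \mapsto 2(a_1+a_2+a_3) - 3(b_1+b_2)$, which vanishes on the diagonal and so descends to $T$. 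The criterion then asserts that $M$ is $\chi$-unstable (resp.\ strictly semistable) iff some nonzero such $\lambda$ satisfies $2\sum a_i - 3\sum b_j < 0$ (resp.\ $=0$).

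For the inclusion $\supseteq$ it suffices, by the $W$-symmetry built into the union, to exhibit an explicit destabilizer for each of the two model strata. For $M \in \mathbb{C}^{12}_{\left(\begin{smallmatrix}0&0\\ \ast&\ast\\ \ast&\ast\end{smallmatrix}\right)}$ I take $\lambda = (-1,0,0;0,0)$: the weight inequalities on $S(M) \subseteq \{2,3\}\times\{1,2\}$ reduce to $0 \geq 0$, and $\langle \chi, \lambda\rangle = -2$. For $M \in \mathbb{C}^{12}_{\left(\begin{smallmatrix}0&\ast\\ 0&\ast\\ \ast&\ast\end{smallmatrix}\right)}$ I take $\lambda = (0,0,1;1,0)$: the inequalities on $S(M) = \{(1,2),(2,2),(3,1),(3,2)\}$ give $0,0,0,1 \geq 0$ and $\langle \chi, \lambda \rangle = -1$. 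Acting by $W = \mathfrak{S}_3 \times \mathfrak{S}_2$ then hits every element of the asserted union.

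For the reverse inclusion, assume $M$ is unstable with destabilizer $(a;b)$; after reordering by $W$ and normalizing by a multiple of the diagonal I may suppose $a_1 \leq a_2 \leq a_3$, $b_1 \leq b_2$, and $a_1 = 0$. A case analysis on the signs of $(a_2,a_3,b_1,b_2)$, combined with the support inequalities $a_i \geq b_j$ on $S(M)$ and the destabilization inequality $2(a_2+a_3) < 3(b_1+b_2)$, forces either the first row of $M$ or the first two entries of the first column of $M$ to vanish, putting $M$ in one of the two named strata. The same analysis executed with $2\sum a_i = 3\sum b_j$ in place of strict inequality shows, using the coprimality of $2$ and $3$, that any such $(a;b)$ preserving a semistable support must lie in the diagonal sublattice of $\mathbb{Z}^{5}$, hence is trivial as a cocharacter of $T$; this gives $\Hom(F, E \otimes V)^{\text{ss}}(T) = \Hom(F, E \otimes V)^{\text{s}}(T)$. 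The main obstacle is the combinatorial bookkeeping in the reverse-inclusion case analysis: a priori many zero patterns on the $3 \times 2$ grid could admit a destabilizing cocharacter, and one must verify carefully that each such pattern lies in the $W$-orbit of one of the two model strata; the canonical ordering enforced by $W$ is what makes this finite check tractable.
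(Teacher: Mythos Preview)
Your argument via the Hilbert--Mumford numerical criterion is correct and is essentially what the paper has in mind: the paper does not give an independent proof but simply says the lemma ``follows from the proof of Proposition~\ref{EPS}'' (i.e.\ from the argument in \cite{EPS} for the $G$-action), and that argument is precisely a Hilbert--Mumford/King-type weight analysis of the same kind you carry out for $T$. Two very small comments: in your destabilizer check for the second stratum you should write $S(M)\subseteq\{(1,2),(2,2),(3,1),(3,2)\}$ rather than equality (this does not affect the argument, since verifying $a_i-b_j\ge 0$ on the larger set suffices); and the ``coprimality of $2$ and $3$'' is not actually needed in the strictly-semistable step---once you know a semistable $M$ has every row nonzero and at most one zero per column, the inequalities $a_i\ge b_j$ on $S(M)$ together with $2\sum a_i=3\sum b_j$ force $(a;b)$ to be diagonal by the same short case split you use for the unstable direction.
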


We put $\mathbb{P}_{\Delta} := \Hom(F, E \otimes V)/\!\!/_{\chi|_T}T$ for the abelian quotient by $T$.
We can check that the action of $T$ on $\Hom(F, E \otimes V)^{\text{s}}(T)$ is fixed-point free
and the toric variety $\mathbb{P}_{\Delta}$ is smooth and projective.
Hence the triple $(\Hom(F,E \otimes V), G,\chi)$ satisfies Assumption \ref{assabnonab}.

\subsubsection{Presentation of the cohomology ring of $N$ and $\mathbb{P}_{\Delta}$}
The structure of the cohomology ring of $N$ is described in \cite{ES}, \cite{Tjo1}.
It is generated by $p_i = c_i(\mathcal{E}_{N}^{\ast})$ $(i=1,2,3)$, $q_j = c_j(\mathcal{F}_{N}^{\ast})$ $(j=1,2)$ and the relations of these generators are completely known.
In particular, we can take a basis of $H^{\ast}(N, \mathbb{Q})$ as 
\begin{equation}\label{dncbasis}
  1, q_1, q_1^2, q_2, p_2, q_1^3, q_1 q_2, q_1 p_2, q_1^4, q_2^2, q_2 p_2, q_1 q_2^2, q_2^3
\end{equation}
from \cite[Theorem 6.9]{ES}. 

Since $\mathbb{P}_{\Delta}$ is a smooth toric variety, we can describe the cohomology ring of $\mathbb{P}_{\Delta}$ combinatorially (cf.\ \cite{Ful}).
We denote the Cox coordinates of $\mathbb{P}_{\Delta}$ by $z_{ij}^{\alpha}$ with $\ell_{ij} = \sum_{\alpha=0}^2 z_{ij}^{\alpha} x_{\alpha}$ and 
$\begin{pmatrix}
  \ell_{11} & \ell_{12} \\
  \ell_{21} & \ell_{22} \\
  \ell_{31} & \ell_{32}
\end{pmatrix}$. 
Let $H_{ij}$ be the cohomology class of the toric divisor of $\mathbb{P}_{\Delta}$ defined by $z_{ij}^{\alpha} = 0$ for some $\alpha$. 
We have the following lemma from \cite[Section 5.2]{Ful}:

\begin{Lemma}
  The cohomology ring $H^{\ast}(\mathbb{P}_{\Delta}, \mathbb{Z})$ is generated by $H_{ij}$ $(1 \leq i \leq  3, 1 \leq j \leq 2)$ and presented by
  \begin{equation*}
    H^{\ast}(\mathbb{P}_{\Delta}, \mathbb{Z}) = \mathbb{Z}[H_{ij} \mid 1 \leq i \leq 3, 1 \leq j \leq 2]/(I_{\text{Lin}} + I_{\text{SR}}),
  \end{equation*}
  where
  \begin{align*}
    I_{\text{Lin}} &= ( -H_{11} + H_{12} + H_{31} - H_{32}, -H_{21} + H_{22} + H_{31} - H_{32} ), \\
    I_{\text{SR}} &= ( H_{11}^3 H_{12}^3, H_{21}^3 H_{22}^3, H_{31}^3 H_{32}^3, H_{11}^3 H_{21}^3, H_{11}^3 H_{31}^3, H_{21}^3 H_{31}^3, H_{12}^3 H_{22}^3, H_{12}^3 H_{32}^3, H_{22}^3 H_{32}^3 )
  \end{align*}
  are ideals in $\mathbb{Z}[H_{ij} \mid 1 \leq i \leq 3, 1 \leq j \leq 2]$.
\end{Lemma}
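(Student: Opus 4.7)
The plan is to invoke the standard presentation of the cohomology ring of a smooth projective toric variety (cf.\ \cite{Ful}). By Proposition \ref{EPS} and the preceding lemma (which together verify Assumption \ref{assabnonab} for $(\Hom(F,E\otimes V),T,\chi|_T)$), the variety $\mathbb{P}_{\Delta}$ is smooth and projective. Its fan has $18$ rays, one per Cox coordinate $z_{ij}^{\alpha}$, and lies in a rank-$14$ lattice. The presentation writes $H^{*}(\mathbb{P}_{\Delta},\mathbb{Z})$ as $\mathbb{Z}[z_{ij}^{\alpha}]$ modulo a linear ideal plus a Stanley--Reisner ideal, so the proof splits into identifying these two ideals and then collapsing the $18$ generators to the $6$ classes $H_{ij}$.

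For the linear ideal, I first compute the $T$-weight of each Cox coordinate: since the action $(g,h)\cdot M = gMh^{-1}$ multiplies the $(i,j)$-entry of $M$ by $g_i h_j^{-1}$, each $z_{ij}^{\alpha}$ carries the weight $\xi_i-\eta_j\in X(T)$ \emph{independently of} $\alpha$, where $\xi_i$ and $\eta_j$ denote the standard characters of the two torus factors $(\mathbb{C}^\ast)^3$ and $(\mathbb{C}^\ast)^2$. By the Cox construction the class of $\{z_{ij}^{\alpha}=0\}$ in $\Pic(\mathbb{P}_{\Delta})\cong X(T)$ equals $\xi_i-\eta_j$, so in particular $z_{ij}^{0}=z_{ij}^{1}=z_{ij}^{2}$ in cohomology (twelve identifications); we rename this common class $H_{ij}$. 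The remaining linear relations are the relations among the six vectors $\xi_i-\eta_j$ inside the rank-$4$ lattice $X(T)=\{(a,b)\in\mathbb{Z}^3\oplus\mathbb{Z}^2 : \sum a_i+\sum b_j=0\}$; the kernel is $2$-dimensional and spanned by the two identities $(\xi_i-\eta_1)-(\xi_i-\eta_2)-(\xi_3-\eta_1)+(\xi_3-\eta_2)=0$ for $i=1,2$, yielding precisely the two generators of $I_{\text{Lin}}$ after rewriting in the $H_{ij}$.

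For the Stanley--Reisner ideal, a square-free monomial $\prod_{\rho\in S}z_\rho$ lies in $I_{\text{SR}}$ iff the coordinate subspace $V(S)=\{z_\rho=0 : \rho\in S\}$ is contained in the unstable locus; since $V(S)$ is irreducible, this is equivalent to $V(S)$ being contained in one of the $W$-translates of the two model $12$-dimensional bad subspaces described in the preceding lemma. Each such piece is cut out by the vanishing of two linear forms $\ell_{ij}$, equivalently by the vanishing of the six Cox coordinates $\{z_{ij}^{\alpha}\}_{\alpha=0,1,2}$ for the two $(i,j)$ in question. Running through the $W$-orbit, the minimal such subsets $S$ come in two types: three ``row'' pieces $\{\ell_{i1}=\ell_{i2}=0\}$ for $i=1,2,3$ and six ``column'' pieces $\{\ell_{ij}=\ell_{i'j}=0\}$ with $j\in\{1,2\}$ and $i\neq i'$. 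After the identification $z_{ij}^{\alpha}\mapsto H_{ij}$, the resulting nine square-free monomials become $H_{i1}^{3}H_{i2}^{3}$ and $H_{ij}^{3}H_{i'j}^{3}$ respectively, matching the nine generators of $I_{\text{SR}}$.

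The one bit of bookkeeping to confirm is that the twelve identifications $z_{ij}^{0}=z_{ij}^{1}=z_{ij}^{2}$ together with the two independent relations among the $H_{ij}$ exhaust the kernel of the weight map $\mathbb{Z}^{18}\twoheadrightarrow X(T)$; this is immediate from the rank count $12+2=14$. Beyond this enumeration, the argument is a routine application of the Cox construction, so I do not anticipate a serious obstacle once the weight data and the unstable locus are in hand.
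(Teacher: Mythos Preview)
Your proof is correct and follows the same approach as the paper, which simply records the lemma as a consequence of the standard presentation in \cite[Section 5.2]{Ful} without spelling out the computation. You have filled in the explicit identification of the linear and Stanley--Reisner ideals from the $T$-weight data and the description of the unstable locus in the preceding lemma, which is exactly what the citation to \cite{Ful} is meant to encapsulate.
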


Note that the Weyl group $W = \mathfrak{S}_3 \times \mathfrak{S}_2$ acts on $H^{\ast}(\mathbb{P}_{\Delta}, \mathbb{C})$ by permutations of $\{H_{ij}\}_{1 \leq i \leq 3, 1 \leq j \leq 2}$. 

\subsubsection{Decomposition of vector bundles}
We consider the representations of $T$ restricting the representations of $G$ given (\ref{etype}) and (\ref{ftype}).
Let $\mathcal{E}_{N, T}$, $\mathcal{F}_{N, T}$ be the corresponding vector bundles on $\mathbb{P}_{\Delta}$. We can write 
\begin{align*}
  \mathcal{E}_{N, T}^{\ast} &= \mathcal{O}(H_{11} + H_{22}) \oplus \mathcal{O}(H_{21} + H_{32}) \oplus \mathcal{O}(H_{31} + H_{12}), \\
  \mathcal{F}_{N, T}^{\ast} &= \mathcal{O}(H_{11} + H_{22} + H_{31}) \oplus \mathcal{O}(H_{12} + H_{21} + H_{32}). 
\end{align*}

\subsubsection{Fundamental Weyl anti-invariant class}
Let $\Phi$ be a root system corresponding to $(G,T)$.
We fix a decomposition $\Phi = \Phi_+ \amalg \Phi_{-}$ into positive roots and negative roots.
Then the fundamental Weyl anti-invariant class $\omega$ is written as
\begin{align*}
  \omega &= \prod_{\alpha \in \Phi_+} c_1(\mathcal{L}_{\alpha}) \\
  &= (H_{11}-H_{21})(H_{11}-H_{31})(H_{21}-H_{31})(H_{12}-H_{11}). 
\end{align*}

\subsubsection{Weyl invariant lift of cohomology classes of $N$}
We define a Weyl invariant lift of the basis in (\ref{dncbasis}) by 
\begin{equation*}
  \widetilde{p_i^a q_j^b} := \tilde{p_i}^a \tilde{q_j}^b \in H^{\ast}(\mathbb{P}_{\Delta}, \mathbb{C})^W,
\end{equation*}
where $\tilde{p_i} = c_i(\mathcal{E}_{N, T}^{\ast})$ ($i = 1,2,3$) and $\tilde{q_j} = c_j(\mathcal{F}_{N, T}^{\ast})$ $(j = 1,2)$.
Then, for example, the lift of $q_1 = c_1(\mathcal{F}_{N}^{\ast})$ is given by 
\begin{equation*}
  c_1(\mathcal{F}_{N, T}^{\ast}) = H_{11}+H_{12}+H_{21}+H_{22}+H_{31}+H_{32},
\end{equation*}
and this gives an isomorphism $\Pic N \cong (\Pic \mathbb{P}_{\Delta})^W$.

\subsubsection{Novikov rings}
We describe $\NE(\mathbb{P}_{\Delta})$.
Since $\mathbb{P}_{\Delta}$ is a toric variety, it is known that $\NE(\mathbb{P}_{\Delta})$ is generated by the class of the torus invariant curves (cf.\ \cite{CLS}).
We identify the numerical class of a curve with the intersection numbers with divisors $H_{ij}$, i.e.\ $C = (C. H_{ij}) \in \mathbb{Z}^6$. 
Then we have the following description of $\NE(\mathbb{P}_{\Delta})$: 
\begin{eqnarray*}
  \NE(\mathbb{P}_{\Delta}) \cong \{d = (d_{ij}) \in \mathbb{Z}^6 \mid -d_{11}+d_{12}+d_{21}-d_{22} = 0, -d_{11}+d_{12}+d_{31}-d_{32} = 0, \qquad \\
  d_{11}+d_{22} \geq 0, d_{21} + d_{32} \geq 0, d_{31} + d_{12} \geq 0, \qquad\\
  d_{11} + d_{22} + d_{31} \geq 0, d_{12} + d_{21} + d_{32} \geq 0\}. 
\end{eqnarray*}
The Novikov ring of $\mathbb{P}_{\Delta}$ is a completion of the semigroup ring $\mathbb{C}[\NE(\mathbb{P}_{\Delta})]$ by the valuation $v(Q^d) = \sum_{i,j} d_{ij}$. 

Since $N$ has Picard number one, the Novikov ring of $N$ is isomorphic to the formal power series ring $\mathbb{C}[\![{\tt{Q}}]\!]$ generated by one variable ${\tt{Q}}$.
It is easy to see that the homomorphism of Novikov rings $p:\Lambda_{\mathbb{P}_{\Delta}} \rightarrow \Lambda_N$ is given by
\begin{equation*}
p(Q^d)=(-1)^{\sum_{i,j} d_{ij}} {\tt{Q}}^{\sum_{i,j} d_{ij}}
\end{equation*}
for $Q^d \in \Lambda_{\mathbb{P}_{\Delta}}$. 

\subsubsection{$\Sym^2 \mathcal{F}_{N, T}^{\ast}$-twisted small $I$-function}
Note that $\Sym^2 \mathcal{F}_{N, T}^{\ast}$ is $\bigoplus_{i=1}^3 \mathcal{O}(D_i)$, where 
\begin{align*}
  D_1 &= 2H_{11} + 2H_{22} + 2H_{31},\\
  D_2 &= H_{11} + H_{12} + H_{21} + H_{22} + H_{31} + H_{32},\\
  D_3 &= 2H_{12} + 2H_{21} + 2H_{32}. 
\end{align*}
For $\tilde{t} \in H^2(\mathbb{P}_{\Delta}, \mathbb{C})$, 
a $\Sym^2 \mathcal{F}_{N, T}^{\ast}$-twisted small $I$-function is given by 
\begin{multline*}
  I_{\mathbb{P}_{\Delta}}^{\Sym^2 \mathcal{F}^{\ast}_{N, T}}(\tilde{t}, z) =
  z e^{\tilde{t}/z} \sum_{d \in \NE(\mathbb{P}_{\Delta})} \frac{\prod_{1 \leq i \leq 3, 1 \leq j \leq 2} \prod_{m=-\infty}^{m=0}(H_{ij} + m z)^3}{\prod_{1 \leq i \leq 3, 1 \leq j \leq 2} \prod_{m=-\infty}^{m=d_{ij}}(H_{ij} + m z)^3}\\
  \prod_{m=1}^{D_1.d} (D_1 + mz) \prod_{m=1}^{D_2.d}(D_2 + mz) \prod_{m=1}^{D_3.d}(D_3 + mz) Q^d e^{\tilde{t}.d}
\end{multline*}
which is a function on $H^2(\mathbb{P}_{\Delta}, \mathbb{C})$ and $1/z$ taking its values in $H^{\ast}(\mathbb{P}_{\Delta}, \mathbb{C})$. 
We write $I_{\mathbb{P}_{\Delta}}^{\Sym^2 \mathcal{F}^{\ast}_{N, T}} = z e^{\tilde{t}/z} \sum_{d \in \NE(\mathbb{P}_{\Delta})} I_{d} Q^d$ for simplicity. 

\subsubsection{$\Sym^2 \mathcal{F}_{N}^{\ast}$-twisted  small $I$-function}
Using the above data, we calculate the conjectural formula of $\Sym^2 \mathcal{F}_N^{\ast}$-twisted small $I$-function by (\ref{formIfct}). 
Then we have 
\begin{align}\label{finresult}
  I_{N}^{\Sym^2 \mathcal{F}_N^{\ast}}(t,z) &= \frac{1}{\omega}\Biggl(\Bigl(\prod_{\alpha \in \Phi_+} z \partial_{\alpha}\Bigr) I_{\mathbb{P}_{\Delta}}^{\Sym^2 \mathcal{F}^{\ast}_{N,T}}\Biggr)\Biggr|_{\tilde{t} = \sum_{i,j}tH_{ij}, Q^d = (-1)^{\sum_{i,j}d_{ij}}{\tt{Q}}^{\sum_{i,j}d_{ij}}}\nonumber\\
  &= \frac{1}{\omega} z e^{\sum_{i,j}t H_{ij}/z} \sum_{d \in \NE(\mathbb{P}_{\Delta})} (-1)^{\sum_{i,j}d_{ij}} (H_{11}-H_{21} + z(d_{11}-d_{21}))(H_{11}-H_{31} + z(d_{11}-d_{31}))\nonumber\\
  &\qquad\qquad (H_{21}-H_{31} + z(d_{21}-d_{31}))(H_{12}-H_{11} + z(d_{12}-d_{11}))I_{d} {\tt{Q}}^{\sum_{i,j}d_{ij}} ,
\end{align}
where $t$ is the coordinates on $H^2(N, \mathbb{C})$ corresponding to the generator $H := c_1(\mathcal{F}_N^{\ast})$ of $\Pic N$.

\subsubsection{Quantum differential equation}
By the identification of Weyl anti-invariant classes with Weyl invariant lifts given by (\ref{liftnote}), we consider the $\Sym^2 \mathcal{F}_N^{\ast}$-twisted small $I$-function (\ref{finresult}) which takes values in $H^{\ast}(N, \mathbb{C})$.
We expand $\tilde{I}_{N}^{\Sym^2 \mathcal{F}_N^{\ast}}$ with respect to $z$.
Then we have 
\begin{equation*}
  \tilde{I}_{N}^{\Sym^2 \mathcal{F}_N^{\ast}}(t,z) = I_0(t) + I_1(t) H + I_2(t) H^2 z^{-1} + I_3(t) H^3 z^{-2}, 
\end{equation*}
where we regard $H$ as the class in $H^{\ast}(N, \mathbb{C}) / \ann(e(\Sym^2 \mathcal{F}_N^{\ast}))$. 
We evaluate ${\tt{Q}} = 1$ and $q = e^t$.
The functions $I_0(t)$ and $I_1(t)$ are given by 
\begin{align*}
  I_0(t) &= 1 + 6 q + 66 q^2 + 1092 q^3 +\cdots ,\\
  I_1(t) &= I_0(t) t + 10 q + 167 q^2 + \frac{26746}{3} q^3 + \cdots . 
\end{align*}
We have the following result by searching a differential operator which annihilates $I_0$ similar to Example \ref{exana}: 
\begin{Proposition}
The function $\tilde{I}_{N}^{\mathrm{Sym}^2 \mathcal{F}_{N}^{\ast}}(t)$ satisfies the differential equation 
\begin{multline*}
  \{\theta^4-2q \left(2 \theta^2+2 \theta+1\right)
   \left(11 \theta^2+11 \theta+3\right)\\
  +4q^2 (\theta+1)^2 \left(76 \theta^2+152 \theta+111\right) -144 q^3 (\theta+1) (\theta+2) (2 \theta+3)^2\} \tilde{I}_{N}^{\mathrm{Sym}^2 \mathcal{F}_N^{\ast}} = 0. 
\end{multline*}
\end{Proposition}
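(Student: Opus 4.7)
The plan is to mirror the strategy already used in Example \ref{exana}: compute the leading component $I_0(t)$ of the $I$-function $\tilde{I}_{N}^{\Sym^2 \mathcal{F}_{N}^{\ast}}$ to high enough order in $q = e^t$, make an ansatz for a fourth-order differential operator with polynomial coefficients, and solve the resulting linear system for its coefficients. The Picard number one assumption guarantees that if $I_0$ is annihilated by such an operator of the expected Calabi--Yau type, then so is the full $H$-graded expansion of $\tilde{I}_{N}^{\Sym^2 \mathcal{F}_{N}^{\ast}}$, because the constant-in-$H$ part of the $I$-function determines the mirror map and the higher components are derived from it by standard Frobenius-type recursion.

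First, I would unwind formula (\ref{finresult}) to extract $I_0(t)$. Concretely, the factor $\omega = (H_{11}-H_{21})(H_{11}-H_{31})(H_{21}-H_{31})(H_{12}-H_{11})$ must divide the degree-$4$ Weyl anti-invariant combination inside the sum, and after division the remaining expression, evaluated in the quotient $H^{\ast}(N,\mathbb{C})/\ann(e(\Sym^2 \mathcal{F}_N^{\ast}))$, contributes to $I_0(t)$ via its $z^0H^0$ term. Because the sum ranges over the cone $\NE(\mathbb{P}_{\Delta})$ cut out by the explicit inequalities recorded above, each coefficient of $q^n$ is a finite sum and can be computed algorithmically; the first few terms $1 + 6q + 66q^2 + 1092q^3 + \cdots$ have already been listed.

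Next, I would posit an operator
\begin{equation*}
P = \sum_{i=0}^{4} a_i(q)\,\theta^i, \qquad a_i(q) = \sum_{j=0}^{3} a_{i,j} q^j,
\end{equation*}
with $\theta = q\,d/dq$, and impose $P \cdot I_0 \equiv 0$ to sufficiently high order in $q$. This yields an overdetermined linear system in the unknowns $a_{i,j}$. Computing $I_0(t)$ to, say, order $q^{20}$ (which is routine from the combinatorial sum) gives far more equations than unknowns, and I would check that its solution space is one-dimensional, spanned by the operator claimed in the statement. Normalizing so that the leading coefficient matches, this produces exactly the displayed Picard--Fuchs operator.

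Finally, a consistency verification is required: the candidate operator must actually annihilate $I_0$ to all orders (not just the truncation used to find it). The natural way to argue this is to fix the operator, then verify it by a hypergeometric-type identity coming directly from the summand of (\ref{finresult}), or, more pragmatically, by extending the power series check to an order well beyond the dimension of the solution space plus the symbol's rank. The main obstacle is not the linear algebra itself but the bookkeeping involved in expanding (\ref{finresult}): one must reliably compute the Weyl anti-invariant derivatives $\prod_{\alpha \in \Phi_+} z\partial_\alpha$ applied to $I_{\mathbb{P}_\Delta}^{\Sym^2 \mathcal{F}_{N,T}^{\ast}}$, then divide by $\omega$ in the ring $H^\ast(\mathbb{P}_\Delta,\mathbb{C})$, and then project into $H^\ast(N,\mathbb{C})/\ann(e(\Sym^2 \mathcal{F}_N^{\ast}))$ using the description of generators and relations recalled from \cite{ES}. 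Since the cone $\NE(\mathbb{P}_\Delta)$ is six-dimensional, naive enumeration is expensive, so I would use symmetry under $W = \mathfrak{S}_3 \times \mathfrak{S}_2$ to reduce to a fundamental domain of lattice points, which makes the computation entirely tractable.
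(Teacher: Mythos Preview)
Your proposal is correct and follows essentially the same approach as the paper, which simply states that the operator is found ``by searching a differential operator which annihilates $I_0$ similar to Example~\ref{exana}'' without further detail. You flesh out the computational steps (enumeration over $\NE(\mathbb{P}_\Delta)$, division by $\omega$, projection to $H^{\ast}(N,\mathbb{C})/\ann(e(\Sym^2\mathcal{F}_N^{\ast}))$, and the linear-algebra ansatz) more explicitly than the paper does, and you are also more candid about the fact that the finite-order power-series check does not by itself certify the identity to all orders.
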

If Theorem \ref{ItoJ} holds for determinantal nets of conics, we can obtain the $J$-function of Calabi--Yau $3$-fold of No.\ $\dnc$ from $\tilde{I}_{N}^{\mathrm{Sym}^2 \mathcal{F}_{N}^{\ast}}$.

\appendix
\section{List of Picard--Fuchs operators}\label{pflist}

We list differential operators which annihilate 
  the $I$-functions of Calabi--Yau $3$-folds corresponding to the pairs $(X, \mathcal{E})$ in Table \ref{cylist}. 
In \cite{BCFKvS}, we can find a conjectural mirror family for complete intersection Calabi--Yau $3$-folds in Grassmannian by using the conifold transition as well as their Picard--Fuchs operators.
To avoid overlap with their list, we omit the Picard--Fuchs operators for complete intersection Calabi--Yau $3$-folds in Grassmannian which correspond to No.\ 1, 2, 3, 6, 12, 19.
Except for this omission, for completeness, we include previously known examples in other literatures.

In the list below, we set $\theta = q \frac{d}{dq}$ and $Y$ to be the zero locus of a general section of $\mathcal{E}$ on $X$.
\bigskip

\paragraph{No.\ 4 : $X = G(2,5), \mathcal{E} = \mathcal{S}^{\ast}(1) \oplus \mathcal{O}(2)$. }
\begin{align*}
  \theta^4 -2 q (2 \theta+1)^2 \left(17 \theta^2 + 17 \theta + 5\right)  + 4 q^2 (\theta + 1)^2 (2 \theta + 1) (2 \theta + 3)
\end{align*}
It is known that the zero locus $Y$ is a complete intersection of $OG(5,10)$ (see the description of $V_{12}$ in \cite[Section 13]{CCGK}). 
\bigskip

\paragraph{No.\ 5 : $X = G(2,5), \mathcal{E} = \wedge^2 \mathcal{Q}(1)$. }
\begin{multline*}
  \theta^4 - q \left(124 \theta^4+242 \theta^3+187 \theta^2+66
  \theta+9\right)\\
  +q^2 \left(123 \theta^4-246\theta^3-787 \theta^2-554 \theta-124\right)\\
  +q^3\left(123 \theta^4+738 \theta^3+689 \theta^2+210 \theta+12\right)\\
  -q^4 \left(124 \theta^4+254 \theta^3+205 \theta^2+78 \theta+12\right)
  +q^5 (\theta+1)^4
\end{multline*}
It is proved in \cite[Proposition 4.7]{IIM} that the zero locus $Y$ is deformation equivalent to the complete intersection of two $G(2,5)$ in $\mathbb{P}^9$ which is studied in \cite{GP}, \cite{Kan}.
The Picard--Fuchs equation of the Calabi--Yau 3-fold is derived in \cite{Kap2} via conifold transition.
\bigskip

\paragraph{No.\ 7 : $X = G(2,6), \mathcal{E} = \mathcal{S}^{\ast}(1) \oplus
    \mathcal{O}(1)^{\oplus 3}$. }
\begin{multline*}
  121 \theta^4 -77 q \left(130 \theta^4+266 \theta^3+210 \theta^2+77 \theta+11\right)\\
  - q^2 \left(32126 \theta^4+89990 \theta^3+103725 \theta^2+ 55253 \theta+ 11198\right)\\
  -q^3 \left(28723 \theta^4+ 74184 \theta^3+ 63474 \theta^2+ 20625 \theta+ 1716\right)\\
  -7 q^4 \left(1135 \theta^4+2336 \theta^3+1881 \theta^2+713 \theta+110\right) - 49 q^5 (\theta+1)^4
\end{multline*}
The zero locus $Y$ is isomorphic to a linear section of a minuscule Schubert variety in $\mathbb{OP}^2$ (see \cite[Section 4.1]{IIM}),
which is studied in \cite{Miu}, \cite{Gal}, \cite{GKM}. 
\bigskip

\paragraph{No.\ 10 : $X = G(2,6), \mathcal{E} = \mathcal{Q}(1) \oplus \mathcal{O}(1)$. }

In \cite{Man}, Manivel shows that $Y$ is isomorphic to a general  linear section of $G(2,7)$ of codimension $7$.
Since the $I$-function which is given by the abelian/nonabelian correspondence is same as No.\ 12, we omit the Picard--Fuchs operator of it. 
\bigskip

\paragraph{No.\ 13 : $X = G(2,7), \mathcal{E} = \Sym^2 \mathcal{S}^{\ast} \oplus \mathcal{O}(1)^{\oplus 4}$. }
\begin{multline*}
  49 \theta^4-14 q \left(134 \theta^4+286 \theta^3+234 \theta^2+91 \theta+14\right)\\
  -4 q^2 \left(3183 \theta^4+10266 \theta^3+13501 \theta^2+8225
  \theta+1918\right)\\
  -8 q^3 \left(2588 \theta^4+8400 \theta^3+10256 \theta^2+5649
  \theta+1190\right)\\
  -48 q^4 \left(256 \theta^4+848 \theta^3+1141 \theta^2+717
   \theta+174\right)-2304 q^5 (\theta+1)^4
\end{multline*}
The zero locus $Y$ is a complete intersection of the orthogonal Grassmannian $OG(2,7)$.
\bigskip

\paragraph{No.\ 15 : $X = G(2,7), \mathcal{E} = \wedge^4 \mathcal{Q} \oplus \mathcal{O}(1) \oplus \mathcal{O}(2)$. }
\begin{equation*}
  \theta^4-6 q (2 \theta+1)^2 \left(3 \theta^2+3 \theta+1\right)-108 q^2 (\theta+1)^2 (2 \theta+1) (2 \theta+3)
\end{equation*}
The zero locus of a general section of $\wedge^4 \mathcal{Q}$ is isomorphic to the homogeneous space $G_2/P_{\text{long}}$ (see the description of $V_{18}$ in \cite[Section 16]{CCGK}).
Hence $Y$ is a complete intersection of the homogeneous space.
The Picard--Fuchs operator is already known by \cite{Kap2} via conifold transition. 
\bigskip

\paragraph{No.\ \linseq : $X = G(2,7), \mathcal{E} = \mathcal{S}^{\ast}(1) \oplus \wedge^4 \mathcal{Q}$. }
It is proved in \cite[Proposition 5.1]{IIM} that $Y$ is deformation equivalent to a linear section of $G(2,7)$. Since Gromov--Witten invariants are deformation invariant, the $J$-function is the same as that of No.\ 12.
\bigskip

\paragraph{No.\ 17 : $X = G(2,8), \mathcal{E} = \wedge^5 \mathcal{Q} \oplus \mathcal{O}(1)^{\oplus 3}$. }
\begin{multline*}
  361 \theta^4 - 19 q \left(700 \theta^4+1238 \theta^3+999 \theta^2+380 \theta+57\right)\\
  + q^2 \left(-64745 \theta^4-368006 \theta^3-609133 \theta^2-412756
  \theta-102258\right)\\
  + 27 q^3 \left(6397 \theta^4+12198 \theta^3-11923 \theta^2-27360 \theta-11286\right)\\
  + 729 q^4 \left(64 \theta^4+1154 \theta^3+2425 \theta^2+1848\theta+486\right) - 177147 q^5 (\theta+1)^4
\end{multline*}
The zero locus $Y$ is isomorphic to a linear section of the determinantal nets of conics which is studied by Tj\o tta. Tj\o tta has calculated the Picard--Fuchs operator  in \cite{Tjo1} by using the direct calculation of the quantum connection of $N$ and using the quantum Lefschetz theorem.
\bigskip

\paragraph{No.\ 18 : $X = G(2,8), \mathcal{E} = \Sym^2 \mathcal{S}^{\ast} \oplus \wedge^5 \mathcal{Q}$. }
\begin{multline*}
  \theta^4-2q \left(2 \theta^2+2 \theta+1\right)
   \left(11 \theta^2+11 \theta+3\right)\\
  +4q^2 (\theta+1)^2 \left(76 \theta^2+152 \theta+111\right) -144 q^3 (\theta+1) (\theta+2) (2 \theta+3)^2
\end{multline*}
The zero locus $Y$ has appeared first in \cite{Tjo1}, and the virtual number of lines and conics on $Y$ are computed in \cite{Tjo1}.
Our computations in Section \ref{dnc} are consistent to these numbers.
\bigskip

\paragraph{No.\ 20 : $X = G(3,6), \mathcal{E} = \wedge^2 \mathcal{S}^{\ast} \oplus \mathcal{O}(1)^{\oplus 2} \oplus \mathcal{O}(2)$. }
\begin{equation*}
\theta^4-8 q (2 \theta+1)^2 \left(3 \theta^2+3 \theta+1\right)+64 q^2 (\theta+1)^2 (2 \theta+1) (2 \theta+3)
\end{equation*}
The zero locus of a general section of $\wedge^2 \mathcal{S}^{\ast}$ is the Lagrangian Grassmannian $LG(3,6)$. 
Hence $Y$ is a complete intersection on the homogeneous space $LG(3,6)$. 
The above differential operator coincides with the predicted one by \cite{vEvS1}.
\bigskip

\paragraph{No.\ 21 : $X = G(3,6), \mathcal{E} = \mathcal{S}^{\ast}(1) \oplus \wedge^2 \mathcal{S}^{\ast}$. }
Similarly to No.\ \linseq, $Y$ is deformation equivalent to linear sections of $G(3,6)$ by \cite[Proposition 5.1]{IIM}. The Picard--Fuchs operator can be found in \cite{BCFKvS}.
\bigskip

\paragraph{No.\ 22 : $X = G(3,7), \mathcal{E} = \Sym^2 \mathcal{S}^{\ast} \oplus \mathcal{O}(1)^{\oplus 3}$. }
The zero locus $Z$ of a general section of $\Sym^2 \mathcal{S}^{\ast}$ is isomorphic to orthogonal Grassmannian $OG(3,7)$. Using the natural isomorphisms $OG(3,7) \cong OG(4,8) \cong OG(1,8)$,
$Z$ is isomorphic to a smooth quadric hypersurface $Q$ of $\mathbb{P}^7$. 
Under this isomorphism, the restriction of $\mathcal{O}_{G(3,7)}(1)$ on $Z$ coincides with $\mathcal{O}_{Q}(2)$. Hence $Y$ is isomorphic to a complete intersection of four quadric hypersurfaces of $\mathbb{P}^7$.
\bigskip

\paragraph{No.\ 23 : $X = G(3,7), \mathcal{E} = (\wedge^2 \mathcal{S}^{\ast})^{\oplus 2} \oplus \mathcal{O}(1)^{\oplus 3}$. }
\begin{multline*}
  3721 \theta^4-61 q \left(3029 \theta^4+5572 \theta^3+4677 \theta^2+1891
  \theta+305\right)\\
  +q^2 \left(1215215 \theta^4+3428132 \theta^3+4267228 \theta^2+2572675
  \theta+611586\right)\\
  -81 q^3 \left(39370 \theta^4+140178 \theta^3+206807 \theta^2+142191
  \theta+37332\right)\\
  +6561 q^4 \left(566 \theta^4+2230 \theta^3+3356 \theta^2+2241
  \theta+558\right)-1594323 q^5 (\theta+1)^4
\end{multline*}
\bigskip

\paragraph{No.\ 24 : $X = G(3,7), \mathcal{E} = (\wedge^3 \mathcal{Q})^{\oplus 2} \oplus \mathcal{O}(1)$. }
\begin{multline*}
  81 \theta^4-9 q \left(317 \theta^4+520 \theta^3+431 \theta^2+171 \theta+27\right)\\
  +q^2 \left(6589 \theta^4-7616 \theta^3-31688 \theta^2-28251
  \theta-8370\right)\\
  -q^3 \left(5521 \theta^4+21384 \theta^3+107223 \theta^2+138402
  \theta+55782\right)\\
  +q^4 \left(21987 \theta^4+130752 \theta^3+152168 \theta^2+9194
  \theta-39016\right)\\
  -19 q^5 (\theta+1) \left(293 \theta^3-7005 \theta^2-18780
  \theta-12535\right)\\
  -361 q^6 (\theta+1) (\theta+2) \left(137 \theta^2+357
  \theta+223\right)
  -6859 q^7 (\theta+1) (\theta+2)^2 (\theta+3)
\end{multline*}
\bigskip

\paragraph{No.\ 25 : $X = G(3,7), \mathcal{E} = \wedge^2 \mathcal{S}^{\ast} \oplus \wedge^3 \mathcal{Q} \oplus \mathcal{O}(1)^{\oplus 2}$. }
\begin{multline*}
  121\theta^4-11q(434\theta^4+820\theta^3+685\theta^2+275\theta+44)\\
  +q^2(7841\theta^4+10916\theta^3+3133\theta^2-2486\theta-1320)\\
  -4q^3(1488\theta^4+4092\theta^3+6761\theta^2+5511\theta+1694)\\
  -32q^4(136\theta^4+1460\theta^3+4556\theta^2+5245\theta+2017)\\
  +256q^5(\theta+1)^2(40\theta^2+212\theta+237)-4096q^6(\theta+1)^2(\theta+2)^2
\end{multline*}
\bigskip

%

\paragraph{No.\ 28 : $X = G(3,8), \mathcal{E} = (\wedge^2 \mathcal{S}^{\ast})^{\oplus 4}$. }
\begin{multline*}
  529 \theta^4-23 q \left(850
  \theta^4+1634 \theta^3+1461 \theta^2+644 \theta+115\right)\\
  +q^2 \left(140191
  \theta^4+504286 \theta^3+765193 \theta^2+554484 \theta+160080\right)\\
  -2 q^3 \left(225598
  \theta^4+1145682 \theta^3+2338529 \theta^2+2203584 \theta+805023\right)\\
  +q^4 \left(739023 \theta^4+4533564
  \theta^3+11008538 \theta^2+12265136 \theta+5219792\right)\\
  -q^5 \left(683438 \theta^4+4503734 \theta^3+11576207
  \theta^2+13658100 \theta+6109137\right)\\
  +q^6 \left(392337 \theta^4+2558982 \theta^3+6275869 \theta^2+6925308
  \theta+2877864\right)\\
  -4 q^7 \left(36774 \theta^4+235086 \theta^3+558499 \theta^2+579748
  \theta+221197\right)\\
  +16 q^8 \left(2081 \theta^4+12548 \theta^3+28814 \theta^2+29364
  \theta+11133\right)
  -256 q^9 (2 \theta+3)^4
\end{multline*}
\bigskip

\section{Some explicit formulas discussed in Section \ref{local}}

\subsection{Quantum differential equation of $\tilde{J}_{X}^{\mathcal{E}'}$}\label{qde}
We give an explicit form of the quantum differential equation of $\tilde{J}_{X}^{\mathcal{E}'}$ in Proposition \ref{deribeqde} as follows:
\begin{align*}
  Q_0 &= 242(\theta-1)^2\theta^6 ,\\
  Q_1 &= -11 \theta^2(1615\theta^6-4845\theta^5+4598\theta^4+1640\theta^3+1370\theta^2+550\theta+88), \\
  Q_2 &= 103968\theta^8-415872\theta^7+1108270\theta^6+662150\theta^5+2273\theta^4\\
   &\hspace{5cm}-230318\theta^3-24299\theta^2+61468\theta+19360,\\
  Q_3 &= -3950784\theta^6+4345395\theta^4+1309385\theta^3-2484702\theta^2 -1988272\theta-440880,\\
  Q_4 &= 4(233928\theta^4+467856\theta^3+664544\theta^2+587366\theta+156697),\\
  Q_5 &= -1216(1368\theta^2+2736\theta-479),\\
  Q_6 &= -6653952.
\end{align*}

\subsection{Factorization of differential operator}\label{factori}
The factors $r(q)$ and $   R_4(q, \theta) $ of the differential operator (\ref{Weylfac}) in the proof of Proposition \ref{derivePF}
are as follows:
\begin{align*}
  r(q) =& 32768 q^5-225280 q^4+619520 q^3-851840 q^2+585640 q-161051, \\
  R_4(q, \theta) =& 103968 q^2 (8 q-11)^3 \theta^4 -415872 q^2 (8 q-11)^2 (20 q-11) \theta^3\\
  & +19 q (8 q-11) \left(12957696 q^3-11215424 q^2+2923360 q-113135\right) \theta^2\\
  & -57 q \left(56033280 q^4-48468992 q^3+5056832 q^2-3620320 q+1244485\right) \theta\\
  & + 2 \left(958169088 q^5+892286208 q^4-67694176 q^3-91902888 q^2+27891105
   q-161051\right). 
\end{align*}

\end{document}